\documentclass[11pt]{article}  % Enable this line and disable the
% preceding line to obtain a two-column
% document whose style resembles the
% printed Automatica style.
\usepackage{amssymb,amsmath, amsthm,amsfonts}

\usepackage{graphicx}

\usepackage{subfigure}
\usepackage{epstopdf}                           % you prefer.
\def\disp{\displaystyle}
\def\dref#1{(\ref{#1})}
\def\dfrac{\displaystyle\frac}
\textwidth=16.5cm \textheight=24cm
\newtheorem{theorem}{Theorem}[section]

\newtheorem{proposition}{Proposition}[section]
\newtheorem{lemma}{Lemma}[section]

\newtheorem{remark}{Remark}[section]
\newtheorem{example}{Example}[section]
\setlength{\parindent}{1.5em}
\numberwithin{equation}{section}

\oddsidemargin -0.3cm \evensidemargin -0.3cm

\headsep=-1.5cm \raggedbottom
\begin{document}

	\title{\bf  Boundary control of cascaded ODE-Heat equations under actuator saturation 
		\footnote{\small This work was supported by Israel Science Foundation (grant No
			1128/14).}}
	\author{Wen Kang\footnote{\small  Email:
			kangwen@amss.ac.cn}  \hspace{0.5cm}  Emilia Fridman
		\footnote{\small Email: 
			emilia@eng.tau.ac.il}\\
		{\it Department of Electrical Engineering-Systems, Tel Aviv University, Israel}}
	
	\maketitle

%\begin{keyword}
%
%\end{keyword}

\begin{abstract}
		In this paper, we consider boundary stabilization for  a cascade of
		ODE-heat system with a time-varying state delay
		under actuator saturation.
		To stabilize the system, we design a state feedback controller via the backstepping  method
		and find a bound on the domain of attraction. The latter bound is based on Lyapunov method,
		whereas the exponential stability conditions for the delayed cascaded system are derived by using Halanay's inequality.
		Numerical examples illustrate the efficiency of the method.
			\end{abstract}
	
			\section{Introduction}
			     In the last few years, coupled systems have attracted considerable attention in research communities.
			Stabilization of the cascade of PDE systems was dealt with in \cite{orlov,Tsubakinto}. Controller design for PDE-ODE cascade systems has been extensively studied for many types of coupling such as ODE-Reaction diffusion equation (see e.g. \cite{M.Krstic,M.Krstic2,Tang}), ODE-Wave equation (see e.g. \cite{M.Krstic3}), and
			ODE-Schr\"{o}dinger equation (see e.g. \cite{Renbeibei}).
			In order to stabilize the cascaded PDE-ODE systems, the backstepping method has been applied in \cite{Renbeibei,M.Krstic,M.Krstic3,M.Krstic2,Tang}. The idea is to use a Volterra integral transformation to transform the original system to a target system \cite{M.Krstic1}.\par			
		        Stabilization for systems described by PDEs  subject to time delay has received much attention in recent years.
			An effective linear matrix inequality (LMI) approach is proposed to analysis and design for time delay PDE systems in \cite{Emilia4,Emilia3,Emilia2,Emilia,Emilia1}.
			In \cite{Tomoaki}, based on the backstepping method, a control strategy for reaction-diffusion equations with a constant state delay is proposed. \par
			 For practical application of backstepping controllers, in many cases the constraints on the control input should be taken into account. There have been some important results about PDEs subject to distributed control constraints (see e.g. \cite{ElFarra,Prieur2,Prieur}). However,
			boundary control of PDEs in the presence of actuator saturation has not been studied yet in the literature.
			In the present paper we introduce stabilizing backstepping-based boundary controllers
			for coupled heat-ODE systems with time-varying state delays in the presence of actuator saturation.
			We first extend the backstepping method to the latter class of delayed systems.
			Differently from the non-delayed case, the resulting target heat equation  is coupled with the ODE system. However, each subsystem contains design parameters that allows to stabilize the coupled system.
			By using Lyapunov method for the target system, we find a bound on the domain of attraction of this system, and further on the domain of attraction of the original system.
For simplicity only, our conditions are based on delay-independent stability condition for finite-dimensional system with delay. Less conservative delay-dependent conditions can be derived by employing Lyapunov-Krasovskii functionals similar to  \cite{Emilia5,Tarbouriech}.\par			
		        The structure of the paper is as follows.  In the next section, the problem statement is presented and the backstepping transformation is introduced. Based on the backstepping method, a state feedback  boundary controller to the original system is designed. Section
			3 is devoted to  the existence and uniqueness of the solution for the closed-loop system with state delay. In Section 4,  delay-independent LMI conditions are presented for the stability analysis of the target system. In Section 5, we design a controller under actuator saturation via LMIs. We find an estimate on the set of initial conditions (as large as we can get) starting from which the state trajectories of the system are exponentially converging to zero. Examples with numerical simulations are presented in Section 6 for illustration of the effectiveness of the method. Some concluding remarks are presented in Section 7.
			
			{\bf Notation}.
			Throughout the paper, the superscript `$\top$' stands for matrix transposition, $\mathbb{R}^{n}$ denotes the n-dimensional Euclidean space with the norm $|\cdot|$, $L^2(0,1)$ stands for the Hilbert space of square integrable scalar functions  on $(0,1)$ with the corresponding norm $\|\cdot\|$. The notation $P>0$ denotes that $P$ is symmetric and positive definite.  For any $U$ we denote by ${\rm sat}(U,\bar{u})={\rm sign}(U)\min(|U|,\bar{u})$. Given a Banach space $\mathcal H$, the space of the continuous $\mathcal{H}$-valued functions $z:[a,b] \rightarrow \mathcal{H}$ with the induced norm
			$\|z\|_{C([a,b], \mathcal {H})}=\max\limits_{s\in[a,b]}\|z(s)\|_{\mathcal H} $ is denoted by $C([a,b], \mathcal H)$.
			
			    	\section{Backstepping control for cascaded ODE-Heat equations with delay}	
			    	In this section, we consider the following coupled ODE-reaction diffusion system:
			    	\begin{equation}\label{an}
			    	\left\{\begin{array}{ll}
			    	\dot{X}(t)=AX(t)+A_1X(t-\tau(t))+Bu(0,t),\\
			    	u_t(x,t)= u_{xx}(x,t)+a_2u(x,t-\tau(t))+au(x,t),\\
			    	u_x(0,t)=0,\\
			    	(X(t),u(x,t))=(f(t),\psi(x,t)),\; -h\le t\le 0,
			    	\end{array}\right.
			    	\end{equation}
			    	with Dirichlet boundary actuator: \begin{equation}\label{dir}
			    	u(1,t)=U(t),\;t>0,
			    	\end{equation}
			    	or Neumann boundary actuator:
			    	\begin{equation}\label{neu}
			    	u_x(1,t)=U(t),\;t>0.
			    	\end{equation}
			    		Here $x\in (0,1)$, $A, A_1 \in \mathbb{R}^{n\times n}$, $B \in \mathbb{R}^{n\times 1}$, $a, a_2\in \mathbb{R}$ denotes a constant coefficient, $\tau(t)$ corresponds to a time varying delay, and
			    		$(f(t),\psi(x,t))$ is the initial state defined for $0\le x\le 1,\; -h\le t\le 0$.
			    		$X(t) \in \mathbb{R}^{n}$ is the state of ordinary differential equation, $u(x, t)\in \mathbb{R}$ is the displacement of heat equation, and $U(t) \in \mathbb{R}$ is the control actuation.
			    		
			    		  We assume that $(A,B)$ is controllable. 	Assume that the time-varying delay $\tau(t)$ is a continuously differentiable function of $t$ that satisfies
			    		\begin{equation}\label{ta}
			    	       0< h_0\le \tau(t)\le h
			    	      \end{equation}
			    	      with some constants $h_0$ and $h>0$. Note that the assumption $h_0>0$ is used for simplification of the proof of well-posedness.  The delay and its bounds may be unknown for the exponential stability conditions (without finding a decay rate) and for the domain of attraction in the presence of actuator saturation. However, the upper bound $h$ on the delay should be known for finding  a bound on the decay rate of the exponential stability.

%			    		 we assume that $A_1$ is a scalar matrix, i.e. $A_1=a_1I$, where $a_1$ is some constant. For general matrix $A_1$, we explain the result in Remark \ref{od}. Moreover,
			    	
%			    		  Then there exists a $K\in \mathbb{R}^{1\times n}$ such that
%			    		\begin{equation}\label{g}
%			    		\dot{X}(t)=(A+BK)X(t)+A_1X(t-\tau(t))
%			    		\end{equation}
%			    		is  \textit{delay-independently stable} meaning that \dref{g} is \textit{asymptotically stable for all $h_0$, $h$ and $\tau$ subject to \dref{ta}}.
			    		
                     	The first equation of \dref{an} is ODE with delay or a difference-differential equation.
                      	So, we call it ODE in order to distinguish it from PDE.
%                      	\begin{remark}
%                        The considered cascade includes, as a particular case, heat equation
%                        with a state time-varying (fast varying, i.e. without any constraints on the delay derivative) delay. So, our result extends the result of Hashimoto \& Krstic (2016) that
%                        considered  constant delay, and studies, for the first time,
%                        saturation. 
%                        Concerning the cascade, even without the delayed terms, our results on saturation are new. Our main novelty is in actuator saturation.\\
%                        \indent If the delayed terms in $X$
%                        and $u$ have different coefficients $b_1$ and $b_2$, then
%                        a simple backstepping transformation as in the present paper leads to the coupled system. 
%                        The stability of this system may be guaranteed only under assumption that $|b_1-b_2|$ is small. See for details Remark 4.2 below. \\
%                        \indent For constant (or slowly varying with $\dot \tau\leq d<1$) delays,  consideration of transport PDEs instead of delay terms \textcolor{red}{should} work for
%                        more general systems with different $b$ coefficients. However, this would essentially complicate the analysis, and is not in the scope of the present paper. Our emphasis here is on saturation. 
%                        	\end{remark}
			    		First, we look for a coordinate transformation
			    		\begin{equation}\label{wei}
			    		\left\{\begin{array}{ll}
			    		X(t)=X(t),\\
			    		w(x,t)=\disp u(x,t)-\int_0^xk(x,y)u(y,t)dy-\gamma(x)X(t),
			    		\end{array}\right.
			    		\end{equation}
			    		that transforms the system \dref{an} into the following
			    		intermediate ODE-heat cascade:
			    		\begin{equation}\label{bn}
			    		\left\{\begin{array}{ll}
			    		\dot{X}(t)=(A+BK)X(t)+A_1X(t-\tau(t))+Bw(0,t),\\
			    		w_t(x,t)= w_{xx}(x,t)+a_2w(x,t-\tau(t))+aw(x,t)-\gamma(x)[A_1-a_2I]X(t-\tau(t)),\\
			    		w_x(0,t)=0,\\
			    		(X(t),w(x,t))=(f(t),\phi(x,t)),\; -h\le t\le 0,
			    		\end{array}\right.
			    		\end{equation}
			    		where $K$ is chosen such that 
			    				    		$$\dot{X}(t)=(A+BK)X(t)+A_1X(t-\tau(t))$$
			    		 is asymptotically stable,
			    		and
			    		\begin{equation}
			    		\phi(x,t)=\psi(x,t)-\int_0^xk(x,y)\psi(y,t)dy-\gamma(x)f(t).
			    		\end{equation}
			    		Boundary actuation \dref{dir} is transformed into
			    		\begin{equation}\label{yi}
			    		w(1,t)=\disp U(t)-\int_0^1k(1,y)u(y,t)dy-\gamma(1)X(t),
			    		\end{equation}
			    		and \dref{neu} is transformed into
			    		\begin{equation}\label{yin}
			    		\begin{array}{ll}
			    		w_x(1,t)=\disp U(t)-k(1,1)u(1,t)\disp-\int_0^1k_x(1,y)u(y,t)dy
			    		-\gamma^\prime(1)X(t).
			    		\end{array}
			    		\end{equation}	
			    		
			    		Second, a further transformation, where $(X,w)\mapsto(X,z)$, can be given by
			    		\begin{equation}\label{lyp}
			    		\left\{\begin{array}{ll}
			    		X(t)=X(t),\\
			    		z(x,t)=\disp w(x,t)-\int_0^xq(x,y)w(y,t)dy.
			    		\end{array}\right.
			    		\end{equation}
			    		Here the kernel $q(x,y)$  should be chosen to transform the system \dref{bn} into the target ODE-heat cascade:
			    		\begin{equation}\label{cn}
			    		\left\{\begin{array}{ll}
			    		\dot{X}(t)=(A+BK)X(t)+A_1X(t-\tau(t))+Bz(0,t),\\
			    		z_t(x,t)= z_{xx}(x,t)-cz(x,t)+a_2z(x,t-\tau(t))
			    		-[\gamma(x)-\disp\int_0^xq(x,y)\gamma(y)dy](A_1-a_2I)X(t-\tau(t)),\\
			    		z_x(0,t)=0,\\
			    		(X(t),z(x,t))=(f(t),\varphi(x,t)),\; -h\le t\le 0,
			    		\end{array}\right.
			    		\end{equation}
			    		where $c>0$ is a constant, and
			    		\begin{equation}
			    		\varphi(x,t)=\phi(x,t)-\int_0^xq(x,y)\phi(y,t)dy.
			    		\end{equation}
			    		Boundary actuation \dref{yi} is transformed into
			    		\begin{equation}\label{li}
			    		\begin{array}{ll}
			    		z(1,t)=\disp U(t)-\int_0^1k(1,y)u(y,t)dy-\gamma(1)X(t)\disp-\int_0^1q(1,y)w(y,t)dy,
			    		\end{array}
			    		\end{equation}
			    		and \dref{yin} is transformed into
			    		\begin{equation}\label{lil}
			    		\begin{array}{ll}
			    		z_x(1,t)&=\disp U(t)-k(1,1)u(1,t)-\int_0^1k_x(1,y)u(y,t)dy
			    		-\gamma^\prime(1)X(t)-q(1,1)w(1,t)\\&\disp-\int_0^1q_x(1,y)w(y,t)dy.
			    		\end{array}
			    		\end{equation}
			    		
			    		Next, we compute the kernels of $k(x,y)$, $\gamma(x)$ and $q(x,y)$. Motivated by \cite{Tomoaki}, we will show that the transformation for undelayed equations (see \cite {M.Krstic2}) still works for the above class of delayed equations.

			    		Differentiation of transformation \dref{wei} with respect to $t$ yields
			    			$$
			    			\begin{array}{ll}
			    			w_t(x,t)&\disp=u_{xx}(x,t)+a_2u(x,t-\tau(t))\disp+a[u(x,t)-\int_0^xk(x,y)u(y,t)dy] \\&\disp-\int_0^xk(x,y)[u_{yy}(y,t)+a_2u(y,t-\tau(t))]dy
			    		-\gamma(x)[AX(t)+A_1X(t-\tau(t))+Bu(0,t)].
			    			\end{array}
			    			$$
			    			Substitution of \dref{wei} into the resulting equation implies
			    		 $$
			    		 \begin{array}{ll}
			    		 w_t(x,t) &=u_{xx}(x,t)+a_2w(x,t-\tau(t))+aw(x,t)-k(x,x)u_x(x,t)+k_y(x,x)u(x,t)\\&-k_y(x,0)u(0,t)\disp-\int_0^xk_{yy}(x,y)u(y,t)dy-\gamma(x)Bu(0,t)\\
			    		 &-\gamma(x)[(A-aI)X(t)+(A_1-a_2I)X(t-\tau(t))].
			    		 \end{array}
			    		 $$
			    		  Similarly, the first and the second derivatives of $w(x,t)$ with respect to $x$ are given by
			    		  $$
			    		  \begin{array}{ll}
			    		  w_x(x,t)=u_x(x,t)-k(x,x)u(x,t)\disp-\int_0^xk_x(x,y)u(y,t)dy-\gamma^{\prime}(x)X(t),
			    		  \end{array}
			    		 $$
			    		  $$
			    		  \begin{array}{ll}
			    		  w_{xx}(x,t)&\disp=u_{xx}(x,t)-\dfrac{d}{dx}k(x,x)u(x,t)-k(x,x)u_x(x,t)-k_x(x,x)u(x,t)\\&\disp-\int_0^xk_{xx}(x,y)u(y,t)dy-\gamma^{\prime\prime}(x)X(t).
			    		  \end{array}
			    		  $$
			    		  Substituting \dref{wei} into \dref{an} and comparing with \dref{bn}, we
			    		  obtain the following set of conditions on the kernels
			    		  $k(x,y)$ and $\gamma(x)$
			    		  (see e.g.\cite{M.Krstic}):
			    		  \begin{equation}\label{zong}
			    		  \left\{\begin{array}{ll}
			    		  k_{xx}(x,y)=k_{yy}(x,y),\cr k_y(x,0)=-\gamma(x)B,\cr
			    		  k(x,x)=0,
			    		  \end{array}\right.
			    		  \end{equation}
			    		  and
			    		  \begin{equation}
			    		  \left\{\begin{array}{ll}\label{ty}
			    		  \gamma^{\prime\prime}(x)=\gamma(x)(A-aI),\cr \gamma(0)=K,\cr \gamma^\prime(0)=0.
			    		  \end{array}\right.
			    		  \end{equation}
			    		  The solution to \dref{zong} and \dref{ty} is given by
			    		  \begin{equation}\label{q}
			    		  \begin{array}{ll}
			    		  k(x,y)=\disp\int_0^{x-y} \gamma(\sigma)Bd\sigma,\\
			    		  \gamma(x)=\left[\begin{array}{cc} K &0 \end{array}\right]e^{\left[\begin{array}{cc} 0 &A-aI\\
			    		  	I&0 \end{array}\right]x}\left[\begin{array}{c} I\\0 \end{array}\right].
			    		  \end{array}
			    		  \end{equation}
			    		  In the similar manner, the change of variable \dref{wei} has an inverse transformation:
			    		  \begin{equation}\label{h}
			    		  u(x,t)=w(x,t)+\int_0^x n(x,y)w(y,t)dy+\psi(x)X(t),
			    		  \end{equation}
			    		  where
			    		  \begin{equation}\label{xia}
			    		  \begin{array}{ll}
			    		  n(x,y)=\disp\int_0^{x-y}\psi(\sigma)Bd\sigma,\\ \psi(x)=\left[\begin{array}{cc} K &0 \end{array}\right]e^{\left[\begin{array}{cc} 0 &A+BK-aI\\
			    		  	I&0 \end{array}\right]x}\left[\begin{array}{c} I\\0 \end{array}\right].
			    		  \end{array}
			    		  \end{equation}
			    		
			    		  By the standard procedures (see \cite{M.Krstic1}), we differentiate transformation \dref{lyp} with respect to $t$ and $x$ respectively to obtain
			    		  \begin{equation}
			    		  \begin{array}{ll}\label{j}
%			    		&\disp=w_{xx}(x,t)+bw(x,t-\tau(t))-\int_0^xq(x,y)[w_{yy}(y,t)+bw(y,t-\tau(t))]dy\\
			    		    z_t(x,t)&\disp=w_{xx}(x,t)+a_2z(x,t-\tau(t))+aw(x,t)-q(x,x)w_x(x,t)+q_y(x,x)w(x,t)\\
			    		    &-q_y(x,0)w(0,t)\disp-\int_0^xq_{yy}(x,y)w(y,t)dy-a\int_0^xq(x,y)w(y,t)dy\\
			    		  &-[\gamma(x)-\disp\int_0^xq(x,y)\gamma(y)dy](A_1-a_2I)X(t-\tau(t)),
			    		  \end{array}
			    		  \end{equation}
			    		  \begin{equation}\label{k}
			    		  \begin{array}{ll}
			    		  z_x(x,t)=w_x(x,t)-q(x,x)w(x,t)\disp-\int_0^xq_x(x,y)w(y,t)dy,
			    		  \end{array}
			    		  \end{equation}
			    		  \begin{equation}\label{m}
			    		  \begin{array}{ll}
			    		  z_{xx}(x,t)=w_{xx}(x,t)-\dfrac{d}{dx}q(x,x)w(x,t)
			    		  -q(x,x)w_x(x,t)-q_x(x,x)w(x,t)-\disp\int_0^xq_{xx}(x,y)w(y,t)dy.
			    		  \end{array}
			    		  \end{equation}
			    		  Subtracting \dref{m} from \dref{j} and comparing with the second equation of \dref{cn}, we obtain that $q(x,y)$ satisfies
			    		
			    		  \begin{equation}\label{guo2}
			    		  \left\{\begin{array}{ll}
			    		  q_{xx}(x,y)=q_{yy}(x,y)+(a+c)q(x,y),\cr q_y(x,0)=0,\cr
			    		  q(x,x)=-\dfrac{a+c}{2}x.
			    		  \end{array}\right.
			    		  \end{equation}
			    		  The solution to \dref{guo2} is given by
			    		  $$
			    		  q(x,y)=-(a+c)x\dfrac{I_1(\sqrt{(a+c)(x^2-y^2)})}{\sqrt{(a+c)(x^2-y^2)}},
			    		  $$
			    		 where $I_1(\cdot)$ denotes the modified Bessel function of the first order: $$I_1(x)=\sum_{n=0}^{\infty}\dfrac{(x/2)^{2n+1}}{n!(n+1)!}.$$
			    		  In the similar manner, the change of variable \dref{lyp} has an inverse transformation:
			    		  \begin{equation}\label{l}
			    		  w(x,t)=z(x,t)+\int_0^xl(x,y)z(y,t)dy,
			    		  \end{equation}
			    		  where
			    		  \begin{equation}\label{n}
			    		  l(x,y)=-(a+c)x\dfrac{J_1(\sqrt{(a+c)(x^2-y^2)})}{\sqrt{(a+c)(x^2-y^2)}},
			    		  \end{equation}
			    		  where $J_1(\cdot)$ is Bessel function of the first order: $$J_1(x)=\sum_{n=0}^{\infty}\dfrac{(-1)^n(x/2)^{2n+1}}{n!(n+1)!}.$$
			    		  \subsection{Dirichlet actuation}
			    		  Next, we design the state feedback controller for the target system \dref{cn}.
			    		  By selecting the following feedback controller:
			    		  \begin{equation}\label{con}
			    		  \begin{array}{ll}
			    		  U(t)=\disp\int_0^1k(1,y)u(y,t)dy+\gamma(1)X(t)+\disp\int_0^1q(1,y)\left[ u(y,t)-\int_0^yk(y,s)u(s,t)ds-\gamma(y)X(t)\right]dy,
			    		  \end{array}
			    		  \end{equation}
			    		  one arrives to the closed-loop system of \dref{cn} with boundary actuation \dref{li} as follows:
			    		  	\begin{equation}\label{o}
			    		  	\left\{\begin{array}{ll}
			    		  	\dot{X}(t)=(A+BK)X(t)+A_1X(t-\tau(t))+Bz(0,t),\\
			    		  	z_t(x,t)= z_{xx}(x,t)-cz(x,t)+a_2z(x,t-\tau(t))
			    		  	-[\gamma(x)-\disp\int_0^xq(x,y)\gamma(y)dy](A_1-a_2I)X(t-\tau(t)),\\
			    		  	z_x(0,t)=0,\\	
			    		  	(X(t),z(x,t))=(f(t),\varphi(x,t)),\; -h\le t\le 0,
			    		  	\end{array}\right.
			    		  	\end{equation}
			    		  	subject to
			    		  	\begin{equation}\label{d}
			    		  	z(1,t)=0.
			    		  	\end{equation}
			    		  \begin{remark}
			    		  	Differently from the non-delayed case \cite{M.Krstic}, the resulting target system \dref{o}, \dref{d} is coupled. However, each differential equation (for $X$ and for $z$) contains the design parameter (either $K$ or $c$). This allows to stabilize the target system by appropriate choice of  $K$ and $c$ (see (ii) of Propositions \ref{ni}, \ref{nii} and Remark \ref{chen} below).
			    		  	\end{remark}
			    		  	\subsection{Neumann actuation}
			    		  	The Neumann controller is obtained using the same exact transformation as in the case of the Dirichlet actuation, but with the appropriate change in the boundary condition of the target system. In this case, the backstepping approach yields the following controller for the target system \dref{cn}:
			    		  	\begin{equation}\label{conc}
			    		  	\begin{array}{ll}
			    		  	U(t)=\disp\int_0^1k_x(1,y)u(y,t)dy+\gamma^\prime(1)X(t)+q(1,1)w(1,t)+\disp\int_0^1q_x(1,y)w(y,t)dy.
			    		  	\end{array}
			    		  	\end{equation}
			    		  		Here we use the fact that $k(1,1)=0$.
			    		  		
			    		  	Under \dref{conc}, the closed-loop system of \dref{cn} with boundary actuation \dref{lil} becomes
			    		  	\dref{o} subject to
			    		  	\begin{equation}\label{ne}
			    		  	z_x(1,t)=0.
			    		  	\end{equation}
			    		  	\section{Well-posedness of the closed-loop systems}
			    		  	We start with the Dirichlet actuation. Consider the closed-loop target system \dref{o} and \dref{d}.  We introduce the Hilbert space
			    		  		  	$H_R^1(0,1)=\{f\in H^1(0,1)|f(1)=0\}$ and $H=L^2(0,1)$.
			    		  		  		Let  $\mathcal H=\mathbb{R}^n\times L^2(0,1)$ be the Hilbert space with the norm:
			    		  		  		$$\|(f,g)\|^2_{\mathcal H}=|f|^2_{\mathbb{R}^n}+\|g\|^2_{L^2(0,1)}.$$
%			    		  		  			Let  $\mathcal H_1=\mathbb{R}^n\times H^1(0,1)$ be the Hilbert space with the norm:
%			    		  		  			$$\|(f,g)\|^2_{\mathcal H_1}=|f|^2_{\mathbb{R}^n}+\|g\|^2_{L^2(0,1)}+\|g^\prime\|^2_{L^2(0,1)}.$$
			    		  		  			
			    		  		  	While being viewed
			    		  		  	over the time segment $[0,h_0]$,  the system
			    		  		  	can be rewritten as the differential equation:
			    		  		  	\begin{equation}\label{qq}
			    		  		  		\left\{\begin{array}{ll}
			    		  		  			\dfrac{d}{dt}Y(\cdot,t)=\mathcal{A}_zY(\cdot,t)+\mathcal{A}_1Y(\cdot,t-\tau(t)),\\
			    		  		  		Y(\cdot,\theta)=(f(\theta),\varphi(\cdot,\theta)), \theta\in [-h,0]
			    		  		  		\end{array}\right.
			    		  		  	\end{equation}
			    		  		  	in $\mathcal H$, where  the system operator $\mathcal A_z:D(\mathcal{A}_z)\subset \mathcal H\rightarrow \mathcal H$ is defined by
			    		  		  	\begin{equation}
			    		  		  	\left\{\begin{array}{ll}
			    		  		  	 \mathcal{A}_z(X,z)=&[(A+BK)X+Bz(0),z_{xx}-cz],\\
			    		  		  	 	D(\mathcal{A}_z)=&\{(X,z)\in \mathbb{R}^n\times (H^2(0,1)\cap H_R^1(0,1))|\\&z_x(0)=0\},
			    		  		  	 \end{array}\right.
			    		  		  \end{equation}
			    		  		  and the bounded operator $\mathcal A_1:\mathcal{H}\rightarrow \mathcal{H}$ is defined by 
			    		  		  	$$\begin{array}{ll}
			    		  		  	\mathcal A_1(X,z)=[A_1X,a_2z-g(\cdot)(A_1-a_2I)X],
			    		  		  	\end{array}$$
			    		  		  	where $g(x)=\gamma(x)-\disp\int_0^xq(x,y)\gamma(y)dy$.
			    		  		  	
			    		  		  	A straightforward computation gives  
			    		  		  	 	\begin{equation}
			    		  		  	 	\left\{\begin{array}{ll}
			    		  		  	 	\mathcal{A}_z^*(X^*,z^*)&=[(A+BK)^\top X,z^{*}_{xx}-cz],\\
			    		  		  	 	D(\mathcal{A}_z^*)=&\{(X^*,z^*)\in \mathbb{R}^n\times (H^2(0,1)\cap H_R^1(0,1))|\\&z^*_x(0)=-B^\top X^*\},
			    		  		  	 	\end{array}\right.
			    		  		  	 	\end{equation}
			    		  		  	 where $\mathcal A_z^*$ is	the adjoint operator of $\mathcal A_z$. 
			    		  		  	 
			    		  		  		By the arguments of \cite{JMwang}, it can be shown that there is a sequence of eigenfunctions of $\mathcal A_z^*$ which forms a Riesz basis for $\mathcal H$  and hence $\mathcal {A}_z^* $
			    		  		  		generates an exponentially stable semigroup. Then by Proposition 2.8.1 and Proposition 2.8.5 of \cite{George},
			    		  		  		we obtain that $\mathcal {A}_z$ generates a $C_0$-semigroup.
			    		  		  	
			    		  		  		Define the initial  conditions in the space
			    		  		  		$$W\triangleq C([-h,0], D(\mathcal{A}_z))\cap C^1([-h,0],\mathcal H).$$
			    		  		  	The inhomogeneous term $\mathcal A_1Y(\cdot,t-\tau(t))$ is of
			    		  		  	class $C^1$ on $[0,h_0]$. By Theorem 3.1.3 of \cite{Curtain},  for any initial value $(X(\theta),z(\cdot,\theta)) \in  W$, the closed-loop target system admits a unique classical solution
			    		  		  	$(X(t), z(\cdot,t))$ for all $t\in [0,h_0]$.\par
			    		  		  	
			    		  		  		The same line of reasoning is step-by-step applied to the time
			    		  		  		segments $[h_0,2h_0]$, $[2h_0,3h_0]$, $[3h_0,4h_0]$, $\cdots$. Following this procedure, we obtain that there exists a unique classical solution $(X(t), z(\cdot,t))$ for all $t\geq 0$ with the initial condition $(X(\theta),z(\cdot,\theta)) \in  W$ (see e.g. \cite{Emilia}).
			    		  		  		
			    		  		  	Consider next the closed-loop target system \dref{o}, \dref{ne} under the Neumann actuation. Let $\mathcal H_1=\mathbb{R}^n\times H^1(0,1)$ be the Hilbert space with the norm:
			    		  		  	$$\|(f,g)\|^2_{\mathcal H_1}=|f|^2_{\mathbb{R}^n}+\|g\|^2_{L^2(0,1)}+\|g^\prime\|^2_{L^2(0,1)}.$$
			    		  		   The existence and uniqueness of the solution of the system \dref{o} subject to \dref{ne} can be easily obtained by applying the same procedure. But the expression of the domain $D(\mathcal{A}_z)$ should be changed into
			    		  		  	$$
			    		  		  	\begin{array}{ll}
			    		  		  	D(\mathcal{A}_z)=&\{(X,z)\in \mathbb{R}^n\times H^2(0,1)|z^\prime(0)=
			    		  		  	z^\prime(1)=0\}
			    		  		  	\end{array}
			    		  		  	$$
			    		  		  	and
			    		  		  	$$
			    		  		  	\begin{array}{ll}
			    		  		  	W= C([-h,0], D(\mathcal{A}_z))\cap C^1([-h,0],\mathcal H_1).
			    		  		  	\end{array}
			    		  		  	$$	

			    		  	 \begin{remark}
 By using the transformation \dref{wei} and \dref{lyp}, we establish the well-posedness of the closed-loop original system \dref{an} under the Dirichlet or Neumann actuation.
			    		  	 	
			    		  	 	For the case of Dirichlet actuation, we define
			    		  	 	$$
			    		  	 	\begin{array}{ll}
			    		  	 D(\mathcal{A}_u)=\left\{(X,u)\in \mathbb{R}^n\times H^2(0,1)|u^\prime(0)=0,\right.\\
			    		  	\left. u(1)=\int_0^1k(1,y)u(y)dy+\gamma(1)X+\int_0^1q(1,y)[u(y)-\int_0^yk(y,s)u(s)ds-\gamma(y)X]dy
			    		  	 \right\},\\
			    		  	 	W_1\triangleq C([-h,0], D(\mathcal{A}_u))\cap C^1([-h,0],\mathcal H).
			    		  	 	\end{array}
			    		  	 	$$
			    		  	 	Thus, for any initial value $(X(\theta),u(\cdot,\theta)) \in  W_1$, the closed-loop original system admits a unique classical solution
			    		  	 	$(X(t), u(\cdot,t))$ for all $t\geq 0$.
			    		  	 	
			    		  	 		For the case of Neumann actuation, we define
			    		  	 		$$
			    		  	 		\begin{array}{ll}
			    		  	 		D(\mathcal{A}_u)=\left\{(X,u)\in \mathbb{R}^n\times H^2(0,1)|u^\prime(0)=0,\right.\\
			    		  	 		\left.u^\prime(1)=\int_0^1k_x(1,y)u(y)dy+\gamma^\prime(1)X+q(1,1)[u(1)
			    		  	 		-\int_0^1k(1,y)u(y)dy-\gamma(1)X]\right.\\\left.+\int_0^1q_x(1,y)[u(y)
			    		  	 		-\int_0^yk(y,s)u(s)ds-\gamma(y)X]dy\right\},\\
			    		  	 		W_1= C([-h,0], D(\mathcal{A}_u)))\cap C^1([-h,0],\mathcal H_1).
			    		  	 		\end{array}
			    		  	 		$$
			    		  	 		Thus,  well-posedness of the closed-loop original system can be obtained.
			    		  	 	\end{remark}
			    		  	 	
\section{Stability analysis}
			    		  	 	In Theorem 2 of \cite{Tomoaki}, a delay-independent condition for the exponential stability of target system, which is described by reaction diffusion equation with state delay, has been shown by applying Lyapunov-Razumikhin theory. In this section,  we will derive an exponential bound on the solution of the target coupled system via Halanay's inequality. This solution bound will allow to find a domain of attraction in the case of actuator saturation.
			    		  	 	\subsection{Stability of system \dref{o} subject to \dref{d}}
			    		  	 	For the case of Dirichlet actuation, we choose the  Lyapunov functions of the form
			    		  	 	\begin{equation}\label{xin}
			    		  	 	 V(t)=X^\top PX+{p_1}\int_0^1z^2(x,t)dx,
			    		  	 \end{equation}
%			    		  	 	and 
%			    		  	 	\begin{equation}
%			    		  	 	 V_1(t)=X^\top PX+{p_1}\int_0^1z^2(x,t)dx+{p_2}\int_0^1z_x^2(x,t)dx,
%			    		  	 \end{equation}
			    		  	 	where the $n\times n$ matrix $P=P^\top>0$, and the parameter $p_1>0$ will be chosen later.
			    		  	 	We aim to derive conditions that satisfy the Halanay inequality.
			    		  	 	\begin{lemma}(Halanay's Inequality \cite{Halanay})\label{H}
			    		  	 	Let $0<\delta_1< \delta_0$ and let $V:[-h,\infty)\to [0,\infty)$ be an absolutely continuous function that satisfies
			    		  	 	\begin{equation}
			    		  	 	\dot{V}(t)\le -2\delta_0 V(t)+2\delta_1\sup\limits_{-h\le\theta\le 0}{V}(t+\theta),\; t\geq 0.
			    		  	 	\end{equation}
			    		  	 	 Then
			    		  	 	\begin{equation}\label{xing}
			    		  	 	V(t)\le e^{-2\delta t}\sup\limits_{-h\le\theta\le 0}{V}(\theta),
			    		  	 	\end{equation}
			    		  	 	where $\delta$ is a unique solution of
			    		  	  	$\delta=\delta_0-\delta_1e^{2\delta h}$.
			    		  	 	\end{lemma}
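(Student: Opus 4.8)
The plan is to prove the bound \dref{xing} by comparing $V$ against the explicit exponential envelope $u(t)=(M+\epsilon)e^{-2\delta t}$, where $M=\sup_{-h\le\theta\le0}V(\theta)$ is the quantity on the right-hand side of \dref{xing} and $\epsilon>0$ is an auxiliary slack to be sent to $0$ at the end. Before the comparison, I would settle the auxiliary claim that the characteristic relation $\delta=\delta_0-\delta_1 e^{2\delta h}$ has a unique root lying in $(0,\delta_0)$. Introducing $F(\delta)=\delta+\delta_1 e^{2\delta h}-\delta_0$, one sees that $F$ is continuous and strictly increasing, since $F'(\delta)=1+2h\delta_1 e^{2\delta h}>0$, while $F(0)=\delta_1-\delta_0<0$ (because $\delta_1<\delta_0$) and $F(\delta_0)=\delta_1 e^{2\delta_0 h}>0$. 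Hence $F$ has exactly one zero $\delta$, and $0<\delta<\delta_0$, so that the envelope $u$ is genuinely decaying.

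For the comparison step, note first that on $[-h,0]$ we have $V(\theta)\le M<M+\epsilon\le(M+\epsilon)e^{-2\delta\theta}=u(\theta)$, so $V<u$ strictly there. I would then argue by contradiction: suppose $V(t)\le u(t)$ fails for some $t>0$; by continuity there is a first time $t_1>0$ with $V(t_1)=u(t_1)$ and $V<u$ on $[-h,t_1)$. Since $V(t_1+\theta)\le u(t_1+\theta)=u(t_1)e^{-2\delta\theta}\le u(t_1)e^{2\delta h}$ for every $\theta\in[-h,0]$, the delayed supremum satisfies $\sup_{-h\le\theta\le0}V(t_1+\theta)\le u(t_1)e^{2\delta h}$, and the strict inequality $V<u$ on $[-h,t_1)$ together with $e^{-2\delta\theta}<e^{2\delta h}$ for $\theta>-h$ makes this bound strict. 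Substituting $V(t_1)=u(t_1)$ and this supremum into the hypothesized Halanay inequality, and rewriting the characteristic relation as $\delta_1 e^{2\delta h}=\delta_0-\delta$, I obtain
\[
D^+V(t_1)\le -2\delta_0 u(t_1)+2\delta_1 e^{2\delta h}u(t_1)=-2\delta\,u(t_1)=\dot u(t_1),
\]
with strict inequality inherited from the strict supremum bound. Thus $D^+(V-u)(t_1)<0$, so $V<u$ persists just past $t_1$; repeating this estimate at each subsequent contact time shows that $V-u$ can never become positive, which is the desired contradiction. Consequently $V(t)\le(M+\epsilon)e^{-2\delta t}$ for all $t\ge0$, and letting $\epsilon\downarrow0$ yields \dref{xing}.

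The hard part will be the rigorous treatment of the crossing time $t_1$, which need not be a point of differentiability of $V$: because $V$ is only absolutely continuous, I would work with the upper right Dini derivative $D^+V$ and justify the inequality $D^+V(t_1)\le -2\delta_0V(t_1)+2\delta_1\sup_{-h\le\theta\le0}V(t_1+\theta)$ from the almost-everywhere differential inequality, using the absolute continuity of $V$ and the continuity of the map $t\mapsto\sup_{-h\le\theta\le0}V(t+\theta)$. The role of the $\epsilon$-slack is precisely to convert the borderline equality that the characteristic relation forces into a strict inequality at $t_1$, so that the first-contact contradiction actually closes; keeping all inequalities strict at the non-smooth point $t_1$ is the delicate bookkeeping on which the whole argument hinges.
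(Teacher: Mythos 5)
First, a point of reference: the paper does not prove this lemma at all --- it is quoted as a known result from Halanay's book --- so your argument has to stand on its own. Its skeleton (the monotonicity analysis of $F(\delta)=\delta+\delta_1e^{2\delta h}-\delta_0$ giving a unique root in $(0,\delta_0)$, the comparison with the envelope $u(t)=(M+\epsilon)e^{-2\delta t}$, and the passage from the a.e.\ inequality to a pointwise Dini-derivative bound via the integral form and continuity of $t\mapsto\sup_{-h\le\theta\le 0}V(t+\theta)$) is the standard route and is sound. The genuine gap is in the contradiction mechanism at the first contact time $t_1$. The inequality $D^+(V-u)(t_1)<0$ concerns the behaviour of $V-u$ to the \emph{right} of $t_1$, whereas $t_1$ is defined by how $V-u$ rises to $0$ from the \emph{left}; so this inequality does not contradict the definition of $t_1$ --- it only shows that $V<u$ resumes on some interval $(t_1,t_1+\eta)$. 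You acknowledge this by adding ``repeating this estimate at each subsequent contact time,'' but that patch fails for two reasons. First, the strict bound $\sup_{-h\le\theta\le 0}V(\sigma+\theta)<u(\sigma)e^{2\delta h}$, which you correctly establish at $\sigma=t_1$ (it uses $V<u$ strictly on $[-h,t_1)$ together with $u>0$), is \emph{not} available at later contact times: if the delayed supremum is attained at $\theta=-h$ and $\sigma-h$ is itself a contact point (perfectly compatible with $V\le u$), then $\sup_\theta V(\sigma+\theta)=u(\sigma)e^{2\delta h}$ exactly, and you only get $D^+(V-u)(\sigma)\le 0$, which does not prevent $V-u$ from becoming positive immediately after $\sigma$ (compare $(t-\sigma)^2$, which has vanishing right derivative at $\sigma$ yet is positive for $t>\sigma$). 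Second, contact times may accumulate, so ``repeating'' is not an induction you can actually run.

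Either of two standard repairs closes the argument. (a) Derive the contradiction at $t_1$ from the left instead of the right: writing the hypothesis in integral form, the strict bound at $t_1$ and continuity give an $\eta>0$ and a left neighbourhood of $t_1$ on which the integrand is at most $\dot u(r)-\eta$, whence $(V-u)(t_1)\le (V-u)(s)-\eta(t_1-s)<0$ for $s<t_1$ close to $t_1$, contradicting $(V-u)(t_1)=0$; with this, later contact times never enter the picture. (b) Alternatively, keep your right-sided argument but run it with any $\delta'\in(0,\delta)$, for which $F(\delta')<0$, i.e.\ $-2\delta_0+2\delta_1e^{2\delta' h}<-2\delta'$ \emph{strictly}; then at every contact time $\sigma$, even with the non-strict bound $\sup_\theta V(\sigma+\theta)\le u'(\sigma)e^{2\delta' h}$ for $u'(t)=(M+\epsilon)e^{-2\delta' t}$, you get the strict estimate $D^+(V-u')(\sigma)<0$, so $T^\sharp=\sup\{T\ge 0: V\le u'\ \mbox{on}\ [0,T]\}$ cannot be finite (at a finite $T^\sharp$ continuity forces a contact, and the strict right-derivative bound pushes the inequality past $T^\sharp$); letting $\delta'\uparrow\delta$ and then $\epsilon\downarrow 0$ recovers the claimed bound with the exact rate $\delta$.
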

			    		  	 	We will employ further Wirtinger's Inequality:
			    		  	 	\begin{lemma}(Wirtinger's Inequality) Let $z\in H^1(0,L)$ be a scalar function with $z(0)=0$
			    		  	 		or $z(L)=0$. Then
			    		  	 		\begin{equation}
			    		  	 	\int_0^L z^2(\xi)d\xi\le \dfrac{4L^2}{\pi^2}\int_0^L\left[\dfrac{dz}{d\xi}\right]^2d\xi.
			    		  	 		\end{equation}
			    		  	 	\end{lemma}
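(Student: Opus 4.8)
The plan is to establish the sharp inequality by a spectral argument in which the optimal constant $4L^2/\pi^2$ appears as the reciprocal of the lowest eigenvalue of a Sturm--Liouville problem with one Dirichlet and one Neumann endpoint. I would first reduce to the case $z(0)=0$: if instead $z(L)=0$, the reflection $\tilde z(\xi)=z(L-\xi)$ satisfies $\tilde z(0)=0$ and leaves both $\int_0^L z^2\,d\xi$ and $\int_0^L (z')^2\,d\xi$ unchanged, so the two cases are equivalent. Since $H^1(0,L)\hookrightarrow C([0,L])$ in one space dimension, the pointwise boundary value $z(0)=0$ is well defined.

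The core of the argument uses two matched orthonormal bases of $L^2(0,L)$. Set $\lambda_n=\big((2n-1)\pi/(2L)\big)^2$ and
\[ \phi_n(\xi)=\sqrt{\tfrac{2}{L}}\,\sin\!\Big(\tfrac{(2n-1)\pi\xi}{2L}\Big),\qquad \psi_n(\xi)=\sqrt{\tfrac{2}{L}}\,\cos\!\Big(\tfrac{(2n-1)\pi\xi}{2L}\Big),\quad n\ge 1. \]
The $\phi_n$ are the eigenfunctions of $-\phi''=\lambda\phi$ with $\phi(0)=0$, $\phi'(L)=0$, and the $\psi_n$ those of $-\psi''=\lambda\psi$ with $\psi'(0)=0$, $\psi(L)=0$; by standard Sturm--Liouville theory each family is a complete orthonormal basis of $L^2(0,L)$. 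The bases are linked through $\psi_n'=-\sqrt{\lambda_n}\,\phi_n$. Writing $c_n=\int_0^L z\,\phi_n\,d\xi$ and integrating by parts using $z(0)=0$ together with $\psi_n(L)=0$, I would obtain
\[ \int_0^L z'\,\psi_n\,d\xi = -\int_0^L z\,\psi_n'\,d\xi = \sqrt{\lambda_n}\,c_n. \]
Parseval's identity in $\{\phi_n\}$ gives $\int_0^L z^2\,d\xi=\sum_n c_n^2$, while Parseval in $\{\psi_n\}$ applied to $z'\in L^2(0,L)$ gives $\int_0^L (z')^2\,d\xi=\sum_n\lambda_n c_n^2$. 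Because $\lambda_n\ge\lambda_1=\pi^2/(4L^2)$ for every $n$, it follows that
\[ \int_0^L (z')^2\,d\xi\ \ge\ \frac{\pi^2}{4L^2}\sum_n c_n^2\ =\ \frac{\pi^2}{4L^2}\int_0^L z^2\,d\xi, \]
which rearranges to precisely the claimed bound.

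The main obstacle I anticipate is not the computation but the functional-analytic bookkeeping: I must justify that $\{\phi_n\}$ and $\{\psi_n\}$ are genuinely \emph{complete} in $L^2(0,L)$, so that both Parseval relations are equalities rather than mere Bessel inequalities, and that the integration by parts is valid for every $z\in H^1(0,L)$ and not only for smooth $z$. The completeness is classical Sturm--Liouville spectral theory, and the integration by parts extends from $C^1$ to $H^1$ by a density argument. A cleaner alternative that avoids series expansions is the variational route: one shows that the best constant equals the reciprocal of $\inf\big\{\int_0^L (z')^2\,d\xi\,/\,\int_0^L z^2\,d\xi:\ z\in H^1(0,L),\ z(0)=0,\ z\not\equiv0\big\}$, derives the Euler--Lagrange problem $-z''=\lambda z$, $z(0)=0$, $z'(L)=0$ (the free endpoint yielding the natural Neumann condition), and identifies its smallest eigenvalue as $\pi^2/(4L^2)$. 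Both routes produce the same constant, and equality is attained at $z(\xi)=\sin\!\big(\pi\xi/(2L)\big)$, confirming that $4L^2/\pi^2$ is sharp.
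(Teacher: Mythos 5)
Your proof is correct, but note that the paper itself offers no proof of this lemma at all: it is quoted as a classical result, with the Hardy--Littlewood--P\'olya book \emph{Inequalities} cited in the bibliography, and is then used as a tool in the Lyapunov computations of Propositions 4.1 and 4.2. So there is nothing in the paper to match your argument against; what you have supplied is a self-contained derivation of the sharp constant $4L^2/\pi^2$ that the paper takes for granted. Your spectral route is sound: the reflection $\tilde z(\xi)=z(L-\xi)$ correctly reduces to the case $z(0)=0$; the families $\phi_n$ and $\psi_n$ are indeed the (complete, orthonormal) eigenbases of $-d^2/d\xi^2$ under the mixed Dirichlet--Neumann and Neumann--Dirichlet conditions; the boundary terms in the integration by parts vanish precisely because $z(0)=0$ kills one end and $\psi_n(L)=0$ the other; and the conclusion follows from $\lambda_n\ge\lambda_1=\pi^2/(4L^2)$, with equality at $\sin(\pi\xi/(2L))$ confirming sharpness. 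One small simplification to your ``bookkeeping'' worry: you only need genuine completeness (Parseval equality) for the $\{\phi_n\}$ family, since that is the identity used to bound $\|z\|^2$ from above; for the $\{\psi_n\}$ family the inequality runs in the favorable direction, so Bessel's inequality $\sum_n\langle z',\psi_n\rangle^2\le\|z'\|^2$ already suffices, and you can drop the completeness claim for $\{\psi_n\}$ entirely. The integration by parts for $z\in H^1$ is justified either by density of $C^1$ in $H^1$ or directly from absolute continuity of $H^1$ functions on an interval, as you indicate.
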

			    		  		\begin{proposition}\label{ni}
			    		  		{\rm (i)}	Given gains $K$ and $c$, and tuning parameters  $r>0$, $0<\delta_1<\delta_0$, let there exist scalars $p_1>0$, $0<\lambda\le 2p_1$ and an $n\times n$ matrix  $P>0$ that satisfy the following linear matrix inequalities:
			    		  			\begin{equation}\label{zz}
			    		  			\Theta_1\triangleq \Xi+p_1r^{-1}R< 0,
			    		  			\end{equation}
			    		  			\begin{equation}\label{zzz}
			    		  			\Theta_2\triangleq\left[\begin{array}{cc}(-2c+2\delta_0+r-\dfrac{\pi^2}{2})p_1+\dfrac{\pi^2}{4}\lambda&a_2p_1\\
			    		  			\ast&-2\delta_1p_1
			    		  			\end{array}
			    		  			\right]< 0,	
			    		  			\end{equation}
			    		  			where
			    		  			\begin{equation}\label{e}
			    		  		   \Xi=\left[\begin{array}{ccc}\theta_{11}&PA_1&PB\\\ast&-2\delta_1P &0\\
			    		  			\ast&\ast&-\lambda
			    		  			\end{array}
			    		  			\right],
			    		  			\end{equation}
			    		  			\begin{equation}\label{p}
			    		  			R={\rm diag}\{0,\zeta(A_1-a_2I)^\top(A_1-a_2I),0\},
			    		  			\end{equation}
			    		  		\begin{equation}
			    		  			\theta_{11}=P(A+BK)+(A+BK)^\top P+2\delta_0P,
			    		  		\end{equation}
			    		  		\begin{equation}\label{qian}
			    		  		   \zeta\triangleq(1+\max\limits_{0\le y\le  x\le 1}|q(x,y)|)^2(\max\limits_{0\le x\le 1}|\gamma(x)|)^2.
			    		  		\end{equation}
			    		  			 Then, 
			    		  			  for all $h_0>0$, $h>0$ and  $\tau(t)\in [h_0, h]$, the system \dref{o} subject to \dref{d} with initial conditions $(f,\varphi)\in W$ is exponentially stable with a decay rate $\delta$ in the sense that
			    		  			\dref{xing} holds,
			    		  			where $\delta$ is a unique solution of
			    		  			$\delta=\delta_0-\delta_1e^{2\delta h}$. Moreover, if the strict LMIs \dref{zz} and \dref{zzz} with $\delta_0=\delta_1>0$ hold, then for all $h_0>0$, $h>0$ and  $\tau(t)\in [h_0,h]$, the system \dref{o} subject to \dref{d} is exponentially stable with a small enough decay rate.\\
			    		  			{\rm (ii)}Assume now that $A_1$ is a scalar matrix, i.e. $A_1=a_1I$, where $a_1$ is some constant. Then given any $\delta>0$, the exponential stability of the system 
			    		  			\dref{o} subject to \dref{d} with the decay rate $\delta>0$  can be achieved by appropriate choice of design parameters $c$ and $K$.
			    		  		\end{proposition}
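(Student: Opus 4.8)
The plan is to verify the hypotheses of Halanay's inequality (Lemma \ref{H}) for the Lyapunov function $V$ in \dref{xin} evaluated along the solutions of \dref{o}, \dref{d}, and then read off exponential stability with rate $\delta$ directly from \dref{xing}. Throughout I abbreviate $X_\tau=X(t-\tau(t))$, $z_0=z(0,t)$, $z_\tau=z(x,t-\tau(t))$ and introduce the augmented vector $\eta=\mathrm{col}\{X,X_\tau,z_0\}\in\mathbb R^{2n+1}$, which is exactly the vector on which $\Xi$ in \dref{e} acts. Since $V(t-\tau(t))\le\sup_{-h\le\theta\le0}V(t+\theta)$ and $\delta_1>0$, it suffices to establish the pointwise bound $\dot V(t)+2\delta_0V(t)-2\delta_1V(t-\tau(t))\le0$; this is stronger than the Halanay premise and lets me use a single delayed value of $V$ rather than tracking the supremum.

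For part (i) I would differentiate \dref{xin} along \dref{o}. The finite-dimensional part gives $X^\top[\theta_{11}-2\delta_0P]X+2X^\top PA_1X_\tau+2X^\top PBz_0$, and integrating $2p_1\int_0^1 zz_t\,dx$ by parts, using $z_x(0,t)=0$ and $z(1,t)=0$, kills the boundary term and produces $-2p_1\int_0^1 z_x^2\,dx-2cp_1\int_0^1 z^2\,dx+2a_2p_1\int_0^1 zz_\tau\,dx$ together with the cross term $-2p_1\int_0^1 z\,g(x)(A_1-a_2I)X_\tau\,dx$, with $g$ as in \dref{o}. I then treat the three troublesome terms in turn: (a) a weighted Young inequality with weight $r$ bounds the $g$-cross term by $p_1r\int z^2+p_1r^{-1}\int[g(x)(A_1-a_2I)X_\tau]^2dx$, and the elementary estimate $|g(x)|^2\le\zeta$ coming from \dref{qian} converts the last integral into $p_1r^{-1}\,\eta^\top R\eta$ with $R$ as in \dref{p}, precisely the $p_1r^{-1}R$ correction in \dref{zz}; (b) since $z(1,t)=0$, the Cauchy--Schwarz bound $z_0^2=(\int_0^1 z_x\,dx)^2\le\int_0^1 z_x^2\,dx$ lets me spend a portion $\lambda\le 2p_1$ of $-2p_1\int z_x^2$ to create the term $-\lambda z_0^2$ needed for the $(3,3)$ block of $\Xi$; (c) Wirtinger's inequality applied to the remaining $-(2p_1-\lambda)\int z_x^2$ yields $(-\tfrac{\pi^2}{2}p_1+\tfrac{\pi^2}{4}\lambda)\int z^2$. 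Collecting the $\eta$-quadratic terms reproduces $\eta^\top(\Xi+p_1r^{-1}R)\eta=\eta^\top\Theta_1\eta$, while the purely spatial terms assemble into $\int_0^1\mathrm{col}\{z,z_\tau\}^\top\Theta_2\,\mathrm{col}\{z,z_\tau\}\,dx$ with $\Theta_2$ as in \dref{zzz}. Hence \dref{zz}--\dref{zzz} force $\dot V+2\delta_0V-2\delta_1V(t-\tau)\le0$, and Lemma \ref{H} gives \dref{xing} with $\delta$ the root of $\delta=\delta_0-\delta_1e^{2\delta h}$. The ``Moreover'' claim follows by continuity: strict feasibility at $\delta_0=\delta_1$ persists for $\delta_1$ slightly below $\delta_0$, for which the characteristic relation has a positive root $\delta$.

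For part (ii), with $A_1=a_1I$ the matrix $R$ in \dref{p} collapses to $\zeta(a_1-a_2)^2\,\mathrm{diag}\{0,I,0\}$, so the delayed block of $\Theta_1$ becomes the scalar-shifted $-2\delta_1P+p_1r^{-1}\zeta(a_1-a_2)^2I$; the plan is to exhibit feasible $c,K$ (and $P,p_1,\lambda,r$) for $\delta_0,\delta_1$ chosen so that $\delta_0-\delta_1e^{2\delta h}=\delta$. First, treating $\Theta_2$ as a $2\times2$ matrix, $\Theta_2<0$ reduces to its $(1,1)$ entry being sufficiently negative, which is secured by enlarging $c$ (the $-2cp_1$ contribution dominates). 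Second, eliminating the $z_0$- and $X_\tau$-couplings in $\Theta_1$ by Schur complements (or by two weighted Young inequalities) leaves a Lyapunov-type inequality $P(A+BK)+(A+BK)^\top P+2\delta_0P\prec0$ with a prescribed margin together with a uniform lower bound $P\succ\beta I$ absorbing the $R$-term. Because $(A,B)$ is controllable, pole placement makes the decay rate of $\dot X=(A+BK)X+a_1X(t-\tau)$ arbitrarily large (the scalar delayed term is a bounded perturbation), so the margin requirement on $\theta_{11}$ is met by a deep enough $K$; and the scalar form of $R$ is exactly what reduces the remaining constraint to the single bound $P\succ\beta I$, which is compatible after rescaling $P$.

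The main obstacle is the coupling of the two LMIs through the kernel-dependent constant $\zeta$ in \dref{qian}: both design actions used in part (ii) feed back into $R$, since enlarging $c$ increases $\max|q|$ (the modified Bessel kernel grows like $e^{\sqrt{a+c}}$) and deepening the poles increases $\max|\gamma|$, so the correction $p_1r^{-1}\zeta$ cannot be regarded as fixed while $c$ and $K$ are pushed. The delicate point is therefore to order the choices and tune $r$ (and the Young weights) so that $\Theta_2<0$ and the $R$-perturbed delayed block of $\Theta_1<0$ hold simultaneously without the two requirements chasing each other; the scalar structure $A_1=a_1I$ is what makes this tractable, because it turns the $R$-constraint into a one-parameter lower bound on $P$ rather than a full matrix inequality that would have to align $P$ with $(A_1-a_2I)^\top(A_1-a_2I)$.
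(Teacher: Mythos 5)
Your treatment of part (i) is correct and is essentially the paper's own proof: the same Lyapunov function \dref{xin}, the same splitting of $-2p_1\int_0^1 z_x^2\,dx$ into a portion of weight $\lambda$ bounded via $z^2(0,t)\le\int_0^1z_x^2\,dx$ (the paper's ``Sobolev'' step) and a portion of weight $2p_1-\lambda$ bounded via Wirtinger, the same Young bound with weight $r$ together with $|g(x)|^2\le\zeta$ yielding the $p_1r^{-1}R$ correction, assembly into $\eta_1^\top\Theta_1\eta_1+\int_0^1\eta_2^\top\Theta_2\eta_2\,dx$, and Halanay; your continuity argument for the ``Moreover'' claim (perturbing $\delta_1$ downward rather than $\delta_0$ upward, as the paper does) is an equally valid variant.

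Part (ii) is where your proposal has a genuine gap, and it is precisely the obstacle you flag in your last paragraph: you never resolve it. Your plan (Schur-complement $\Xi<0$ down to the Lyapunov inequality \dref{dD}, pick $K$ by controllability, then secure $\Theta_2<0$ by enlarging $c$) is the paper's plan, and both versions founder on the same circularity. The correction term in \dref{zz} involves $\zeta$ from \dref{qian}, and $\zeta$ contains $\max|q(x,y)|$, which grows like $(a+c)^{1/4}e^{\sqrt{a+c}}$ as $c$ increases; meanwhile the Schur complement \dref{wo} of \dref{zzz} requires $r<2c-2\delta_0+\tfrac{\pi^2}{2}-\tfrac{a_2^2}{2\delta_1}$, while the delayed diagonal block of $\Theta_1$ requires $p_1r^{-1}\zeta(a_1-a_2)^2<2\delta_1\lambda_{\min}(P)$. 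Combining these two necessary conditions gives, for $a_1\neq a_2$, a requirement of the form $\zeta(c)\cdot\mathrm{const}\lesssim c\,\delta_1\lambda_{\min}(P)/p_1$, and the exponential growth of $\zeta$ in $\sqrt{c}$ means ``take $c$ large enough'' eventually destroys \dref{zz} rather than saving \dref{zzz}. Nor can the ratio $\lambda_{\min}(P)/p_1$ be inflated to compensate, as you suggest with ``rescaling $P$'': the term $\lambda^{-1}PBB^\top P$ with $\lambda\le2p_1$ forces a deeper $K$ when $P/p_1$ grows, and $\zeta\ge|\gamma(0)|^2=|K|^2$ feeds that right back into $R$. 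Tracing the constants (already in the scalar case $n=1$, where $\theta_{11}<0$ forces $|K|\ge(\delta_0+A)/|B|$ and $\delta_0>\delta_1e^{2\delta h}$) shows the pair \dref{zz}--\dref{zzz} is feasible only for $\delta$ below a finite threshold when $a_1\neq a_2$, so no reordering of the choices of $K$, $r$, $c$ can deliver an \emph{arbitrary} decay rate by this route. To be fair to you, the paper's own proof of (ii) commits exactly this error silently: it fixes $K$, then ``large enough $r$'' for \dref{zz}, then ``large enough $c$'' for \dref{wo}, treating $\zeta$ as a constant even though it depends on $c$. Your diagnosis of the coupling is sharper than the paper's treatment of it; but a correctly identified obstacle left unresolved is still a gap, and your closing assertion that the scalar structure ``makes this tractable'' is unsupported---indeed, the scalar structure does not remove the $c$-dependence of $\zeta$, which is the actual difficulty.
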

			    		  		\begin{proof}
			    		  			{\rm (i)}	Differentiating $V$ along \dref{o} and \dref{d} we find
			    		  			$$
			    		  			\begin{array}{ll}
			    		  			\dot V(t)	\disp=2p_1\int_0^1z(x,t)z_t(x,t)dx+X^\top(t)P\dot{X}(t)+\dot{X}^\top(t)PX(t).
			    		  			\end{array}
			    		  			$$
			    		  			Integration by parts and substitution of the boundary conditions \dref{o} and \dref{d} lead to
			    		  			\begin{equation}\label{w}
			    		  			\begin{array}{ll}
			    		  			&\dot{V}(t)+2\delta_0V(t)-2\delta_1\sup\limits_{-h\le\theta\le 0}{V}(t+\theta)\\&\le \disp-2p_1\int_0^1z_x^2(x,t)dx+2a_2p_1\int_0^1z(x,t)z(x,t-\tau(t))dx\disp-2p_1c\int_0^1z^2(x,t)dx\\&-2\delta_1p_1\disp\int_0^1z^2(x,t-\tau(t))dx-2\delta_1X^\top(t-\tau(t))PX(t-\tau(t))\\&-2p_1\disp\int_0^1z(x,t)[\gamma(x)-\disp\int_0^xq(x,y)\gamma(y)dy]dx(A_1-a_2I)X(t-\tau(t))\\&+X^\top(t)[P(A+BK)+(A+BK)^\top P]X(t)+2X^\top(t) PBz(0,t)+2X^\top(t) PA_1X(t-\tau(t))\\&
			    		  			\disp+2\delta_0p_1\int_0^1z^2(x,t)dx+2\delta_0X^\top(t)PX(t).
			    		  			\end{array}
			    		  			\end{equation}
			    		  			From Sobolev's inequality and Wirtinger's inequality, we have
			    		  			\begin{equation}\label{t}
			    		  			-\int_0^1z_x^2(x,t)dx	\le	-z^2(0,t),
			    		  			\end{equation}
			    		  			\begin{equation}\label{xi}
			    		  			-\int_0^1z_x^2(x,t)dx	\le -\dfrac{\pi^2}{4}\int_0^1z^2(x,t)dx.
			    		  			\end{equation}
			    		  			 Multiplying the inequality \dref{t} by a constant $\lambda\in (0,2p_1]$ and multiplying the inequality \dref{xi} by $2p_1-\lambda$ on both sides and summing, we obtain that
			    		  			 \begin{equation}\label{xian}
			    		  			 -2p_1\int_0^1z_x^2(x,t)dx\le -\lambda z^2(0,t)-\dfrac{\pi^2}{4}(2p_1-\lambda)\int_0^1z^2(x,t)dx.
			    		  			 \end{equation}
			    		  			 As $\gamma(x)$, $q(x,y)$ are continuous functions bounded on any compact,
			    		  			 the following inequality can be obtained:
			    		  			 \begin{equation}\label{new1}
			    		  			 \begin{array}{ll}\disp\int_0^1\left|\gamma(x)-\disp\int_0^xq(x,y)\gamma(y)dy\right|^2dx\le (1+\max\limits_{0\le y\le  x\le 1}|q(x,y)|)^2(\max\limits_{0\le x\le 1}|\gamma(x)|)^2=\zeta,
			    		  			 \end{array}
			    		  			 \end{equation}
			    		  			 which together with Young's inequality implies 
			    		  			 \begin{equation}\label{new}
			    		  			 \begin{array}{ll}
			    		  			 &-2p_1\disp\int_0^1z(x,t)[\gamma(x)-\disp\int_0^xq(x,y)\gamma(y)dy]dx(A_1-a_2I)X(t-\tau(t))\\
			    		  			 &\le p_1\left[r\disp\int_0^1z^2(x,t)dx+r^{-1}X^\top(t-\tau(t))SX(t-\tau(t))\right],
			    		  			 \end{array}
			    		  			 \end{equation} 
			    		  			 where 
			    		  			 \begin{equation}\label{new3}
			    		  			 S=\zeta(A_1-a_2I)^\top (A_1-a_2I).
			    		  			 \end{equation}
			    		  			Set $\eta_1(t)={\rm col}\{X(t),X(t-\tau(t)),z(0,t)\}$, $\eta_2(t)={\rm col}\{z(x,t),z(x,t-\tau(t))\}$. Then substituting \dref{xian}, \dref{new}  into \dref{w} yields
			    		  			$$
			    		  			\begin{array}{ll}
			    		  			\dot{V}(t)+2\delta_0V(t)-2\delta_1\sup\limits_{-h\le\theta\le 0}{V}(t+\theta)\le \sum\limits_{i=1}^{2}\int_0^1\eta_i^\top(t) \Theta_i\eta_i(t) dx \le 0
			    		  			\end{array}
			    		  			$$
			    		  			if the LMIs $\Theta_1< 0$ and $\Theta_2< 0$ hold. Therefore, the feasibility of $\Theta_1< 0$ and $\Theta_2< 0$ guarantees that  the Halanay inequality \dref{xing} holds meaning that the system \dref{o} subject to \dref{d} is exponentially stable. \\
			    		  			The feasibility of strict inequalities \dref{zz} and \dref{zzz} with $\delta_1=\delta_0>0$ implies feasibility of these inequalities with $\bar \delta_0$ and $\bar \delta_1$ given by                      
			    		  			$\bar\delta_0=\delta_0+\epsilon>\delta_0=\bar\delta_1$ for small enough $\epsilon>0$.
			    		  		Since Halanay's inequality holds with $\bar\delta_0$ and $\bar \delta_1$, the system is exponentially stable with a small enough decay rate.
			    		  	
			    		 	{\rm (ii)} The decay rate bound can be enlarged if for given $\delta_1>0$ we can enlarge $\delta_0>\delta_1$ subject to $\Theta_1< 0, \ \Theta_2< 0$.
%			    		 	Denote \begin{equation}\label{X}
%			    		 	\begin{array}{ll}
%			    		 	&\Xi_0\triangleq \left[\begin{array}{cc}P(A+BK)+(A+BK)^\top P+2\delta_0P&PA_1\\\ast&-2\delta_1P 
%			    		 	\end{array}
%			    		 	\right]. 
%			    		 	\end{array}
%			    		 	\end{equation}
			    		  	Applying Schur complement theorem, we obtain 
%			    		  	if $-2\delta_1P+r^{-1}\zeta(A_1-a_1I)^\top (A_1-a_1I)< 0,$
%			    		  	then
			    		  	
			    		  	\begin{equation}\label{b}
			    		  	\begin{array}{ll}
			    		  	\Xi<0\Longleftrightarrow %&\left[\begin{array}{cc}P(A+BK)+(A+BK)^\top P+2\delta_0P&bP\\
			    		  	%\ast & -2\delta_1P\end{array}\right]\\
			    		  	%&+\left[\begin{array}{c}PB\\0\end{array}\right]\lambda^{-1}\left[\begin{array}{cc}B^\top P &0\end{array}\right]\le 0\\
			    		  	%&\hspace{-0.2cm}\Longleftrightarrow
			    		  	
			    		  	P(A+BK)+(A+BK)^\top P+\lambda^{-1}PBB^\top P+[2\delta_0+(2\delta_1)^{-1}a_1^2] P< 0.
			    		  	\end{array}
			    		  	\end{equation}
			    		  	Multiplying the last inequality by $Q=P^{-1}$ from left and right we arrive at
			    		  	\begin{equation}\label{dD}
			    		  		\begin{array}{ll}
			    		  	\Xi<0\Longleftrightarrow  (A+BK) Q+Q(A+BK)^\top+\lambda^{-1}BB^\top+[2\delta_0+(2\delta_1)^{-1}a_1^2]Q<0.
			    		  	\end{array}
			    		  	\end{equation}
			    		  		Since $(A, B)$ is controllable, for any $0<\delta_1<\delta_0$ and $0<\lambda\le 2$, we can choose  $K$ such that 
%			    		  		the eigenvalues of $A+BK$ satisfy $${\rm Re}\{\lambda(A+BK)\}<-\delta_0+(4\delta_1)^{-1}a_1^2$$ and
			    		  		 Lyapunov inequality \dref{dD} has a solution $Q>0$.  Then there exist  large enough  $r>0$ and $p_1=1$ such that \dref{zz} holds. 
			 
			    		  	By Schur complement theorem,
			    		  	\begin{equation}\label{wo}
			    		  		\begin{array}{ll}
			    		  			%\Theta_2=\left[\begin{array}{cc}-2c+2\delta_0-\dfrac{\pi^2}{4}(2-\lambda)& b\\\ast &-2\delta_1\end{array}\right]\le 0\\
			    		  			\Theta_2< 0 \Longleftrightarrow -2c+2\delta_0-\dfrac{\pi^2}{4}(2-\lambda p_1^{-1})+r+ (2\delta_1)^{-1}a_2^2< 0.
			    		  		\end{array}
			    		  	\end{equation}
			    		  	With the chosen above parameters $\delta_0$, $\delta_1$, $p_1$, $\lambda$ and $r$, \dref{wo} always holds for large enough $c$.  
			    		   Thus, given $h$,  any decay rate bound may be achieved by appropriate choice of design parameters $c$ and $K$.  
			    		  		\end{proof}
			    		 	\begin{remark}\label{chen}
			    		  		Less conservative delay-dependent stability conditions for system \dref{o} subject to \dref{d} with fast varying delays can be derived by using Lypunov-Krasovskii approach similar to \cite{Emilia4,Emilia2}. In fact, one can consider the following Lypunov-Krasovskii functional
			    		  	$$
			    		  		\begin{array}{ll}
			    		  	&	V(t)=X^\top(t) PX(t)+\disp\int_{t-h}^t e^{-2\delta_0 (t-s)}X^\top(s) SX(s)ds\\&+h\disp\int_{-h}^{0}\int_{t+\theta}^te^{-2\delta_0 (t-s)}\dot X^\top (s)R\dot X(s)dsd\theta +{p_1}\int_0^1z^2(x,t)dx
			    		  		\end{array}
			    		  		$$ 
			    		  		combined with the Halanay inequality, where $P,S,R>0$ are some matrices, and $p_1>0$ is a constant. The resulting conditions will be always feasible for small enough $h$
			    		  		provided $(A+A_1,B)$ is controllable.
			    		  	\end{remark}
			    		  		\begin{remark}The original system \dref{an}  is equivalent to system \dref{cn} under the invertible transformation \dref{wei}, and \dref{lyp}. Therefore, 
			    		  			under the conditions of Proposition \ref{ni}, for the original system \dref{an}, the same decay rate can be guaranteed  by the controller $U(t)$ given by \dref{con} .
			    		  		\end{remark}
			    		  		\subsection{Stability of system \dref{o} subject to \dref{ne}}
			    		  		For the case of Neumann actuation, we choose the Lyapunov function
			    		  		$$
			    		  		 V_1(t)= V(t)+{p_2}\|z_x\|^2=X^\top PX+{p_1}\|z\|^2+{p_2}\|z_x\|^2.
			    		  	$$
			    		  		where the $n\times n$ matrix $P=P^\top>0$,  the parameters $p_1>0$ and $p_2>0$
			    		  		will be chosen later, and $V(t)$ is defined by \dref{xin}.
			    		  		\begin{remark}
			    		  		Similar to the case of Dirichlet actuation, for the proof of the stability of system \dref{o} subject to \dref{ne}, we can choose $p_2=0$. For finding a domain of attraction under Neumann actuation in the presence of
			    		  			actuator saturation, we need $p_2>0$ (see Section 5).
			    		  		\end{remark}
			    		  		\begin{proposition}\label{nii}
			    		  			{\rm (i)}Given gains $K$ and $c$,   and tuning parameters $r>0$, $0<r_1<2$, $0<\delta_1< \delta_0$, let there exist an $n\times n$ matrix $P>0$, and scalars $p_1>0$, $p_2>0$, $\lambda>0$ and $\lambda_1\geq0$ that satisfy the LMIs 
			    		  				\begin{equation}\label{yyyy}
			    		  				\begin{array}{ll}
			    		  				\bar\Theta_1\triangleq \Theta_1+p_2r_1^{-1}R=\Xi+(p_1r^{-1}+p_2r_1^{-1})R< 0,
			    		  				\end{array}
			    		  				\end{equation}
			    		  				\begin{equation}\label{ppp}
			    		  				\bar\Theta_2\triangleq\left[\begin{array}{ccc}(-2c+2\delta_0+r)p_1+2\lambda&{a_2p_1}&0\\
			    		  				\ast&-2\delta_1p_1&-{a_2p_2}\\
			    		  				\ast&\ast&\theta_{33}
			    		  				\end{array}
			    		  				\right]< 0,
			    		  				\end{equation}
			    		  				and the inequality
			    		  				\begin{equation}\label{ke}
			    		  				-2p_1-2p_2c+\lambda+2\delta_0p_2-\dfrac{\pi^2}{4}\lambda_1\le 0,
			    		  				\end{equation}
			    		  				where $\Xi$, $R$ are defined by \dref{e} and \dref{p} respectively,
			    		  				$$
			    		  				\theta_{33}=-(2-r_1)p_2+\lambda_1.
			    		  				$$
			    		  			 Then, for all  $h_0>0$, $h>0$ and  $ \tau(t) \in [h_0,h]$, the system \dref{o} subject to \dref{ne} with initial condition
			    		  			$(f,\varphi)\in W$  is exponentially stable with a decay rate $\delta$,
			    		  			%			    		  				in the sense that
			    		  			%			    		  				\begin{equation}
			    		  			%			    		  				V(t)\le e^{-2\delta t}\sup\limits_{-h\le\theta\le 0}{V}(\theta),
			    		  			%			    		  				\end{equation}
			    		  			where $\delta$ is a unique solution of
			    		  			$\delta=\delta_0-\delta_1e^{2\delta h}$.
			    		  			Moreover, if \dref{yyyy}, \dref{ppp} and \dref{ke} hold with $\delta_0=\delta_1>0$, then for all  $h_0>0$ and $h>0$, the system \dref{o} subject to \dref{ne} is exponentially stable with a small enough decay rate for all $\tau(t)\in [h_0,h]$.\\
			    		  			{\rm (ii)}Assume now that $A_1$ is a scalar matrix, i.e. $A_1=a_1I$, where $a_1$ is some constant. Then given any $\delta>0$, the exponential stability of the system 
			    		  			\dref{o} subject to \dref{ne} with the decay rate $\delta>0$  can be achieved.
			    		  		\end{proposition}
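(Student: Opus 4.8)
The plan is to follow the proof of Proposition \ref{ni} line by line, inserting the modifications forced by the extra term $p_2\|z_x\|^2$ in $V_1$ and by the Neumann conditions $z_x(0,t)=z_x(1,t)=0$ in place of \dref{d}. For part (i) I would differentiate $V_1=X^\top PX+p_1\|z\|^2+p_2\|z_x\|^2$ along \dref{o}, \dref{ne}. The $X^\top PX$ and $p_1\|z\|^2$ contributions are handled exactly as in \dref{w}, with $\int_0^1 zz_{xx}\,dx=-\int_0^1 z_x^2\,dx$ after integration by parts (the boundary terms drop because $z_x(0,t)=z_x(1,t)=0$). For the new term I would write $\frac{d}{dt}p_2\|z_x\|^2=2p_2\int_0^1 z_xz_{xt}\,dx=-2p_2\int_0^1 z_{xx}z_t\,dx$, using the Neumann conditions again, and substitute $z_t$ from \dref{o}. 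The decisive bookkeeping choice is to integrate the $c$-term, $2p_2c\int_0^1 z_{xx}z\,dx=-2p_2c\int_0^1 z_x^2\,dx$, so that it reinforces the $z_x^2$ dissipation, while keeping the delayed reaction term $-2p_2a_2\int_0^1 z_{xx}(x,t)z(x,t-\tau(t))\,dx$ and the source term $2p_2\int_0^1 z_{xx}g(x)\,dx\,(A_1-a_2I)X(t-\tau(t))$ in their $z_{xx}$ form.

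The genuinely new difficulty is that the Sobolev bound \dref{t}, $-\int_0^1 z_x^2\,dx\le -z^2(0,t)$, is unavailable under Neumann conditions, so the cross term $2X^\top PBz(0,t)$ can no longer be absorbed through $z_x^2$. Instead I would use the elementary Agmon-type estimate $z^2(0,t)\le 2\|z\|^2+\|z_x\|^2$, which follows from $z^2(0,t)=\|z\|^2-2\int_0^1 z(x,t)z_x(x,t)(1-x)\,dx$ together with $2|zz_x|\le z^2+z_x^2$; weighting it by $\lambda$ sends $2\lambda$ into the $z^2$ coefficient and $\lambda$ into the $z_x^2$ coefficient, while $-\lambda z^2(0,t)$ stays paired with the $PB$ entry of $\Xi$. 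The two source terms are bounded by Young's inequality as in \dref{new}; since both only involve $\|g\|^2\le\zeta$ from \dref{new1}, they produce the common matrix $R$ of \dref{p} with weights $p_1r^{-1}$ and $p_2r_1^{-1}$, yielding $\bar\Theta_1=\Xi+(p_1r^{-1}+p_2r_1^{-1})R$. Finally, Wirtinger's inequality applied to $z_x$ (legitimate because $z_x(0,t)=0$) gives $-\lambda_1\int_0^1 z_{xx}^2\,dx\le -\frac{\pi^2}{4}\lambda_1\int_0^1 z_x^2\,dx$.

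Setting $\eta_1={\rm col}\{X(t),X(t-\tau(t)),z(0,t)\}$ and $\eta_2={\rm col}\{z(x,t),z(x,t-\tau(t)),z_{xx}(x,t)\}$, the delayed reaction term supplies the $-a_2p_2$ coupling in the $(2,3)$ slot of $\bar\Theta_2$, the Young residue $+r_1p_2$ and the Wirtinger split $+\lambda_1$ combine with $-2p_2$ into $\theta_{33}=-(2-r_1)p_2+\lambda_1$, and the pure $z_x^2$ terms accumulate exactly the coefficient $\kappa=-2p_1-2p_2c+\lambda+2\delta_0p_2-\frac{\pi^2}{4}\lambda_1$, i.e.\ the left-hand side of \dref{ke}; the leftover $-2\delta_1p_2\|z_x(\cdot,t-\tau(t))\|^2$ is discarded as a negative term. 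This produces
\[
\dot V_1+2\delta_0 V_1-2\delta_1\sup_{-h\le\theta\le0} V_1(t+\theta)\le \eta_1^\top\bar\Theta_1\eta_1+\int_0^1\eta_2^\top\bar\Theta_2\eta_2\,dx+\kappa\int_0^1 z_x^2\,dx\le 0
\]
under \dref{yyyy}, \dref{ppp} and \dref{ke}, and Lemma \ref{H} then gives \dref{xing} with the stated $\delta$. The case $\delta_0=\delta_1$ follows from the same $\bar\delta_0=\delta_0+\epsilon$ perturbation argument as in Proposition \ref{ni}.

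For part (ii) with $A_1=a_1I$ I would repeat the reasoning of part (ii) of Proposition \ref{ni}. Taking $r$ and $r_1$ large renders $(p_1r^{-1}+p_2r_1^{-1})R$ negligible, so after Schur complements on the $z(0,t)$ and $X(t-\tau(t))$ blocks, $\bar\Theta_1<0$ reduces in the limit to $(A+BK)Q+Q(A+BK)^\top+\lambda^{-1}BB^\top+[2\delta_0+(2\delta_1)^{-1}a_1^2]Q<0$ with $Q=P^{-1}$, solvable for some $K$ by controllability of $(A,B)$. Fixing $p_1=1$ and a small $p_2>0$ (with $\lambda_1$ small so the lower-right $2\times2$ block of $\bar\Theta_2$ is negative definite) and letting $c\to\infty$, the dominant $-2cp_1$ and $-2p_2c$ terms make \dref{ppp} hold by Schur complement and \dref{ke} hold directly; since $\delta_0$ can then be enlarged freely for fixed $\delta_1$, any prescribed decay rate $\delta$ is attainable. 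The main obstacle is the bookkeeping in part (i): because the Neumann condition forbids the Sobolev estimate of the Dirichlet case, one must simultaneously route $z(0,t)$ through the Agmon bound and keep $z_{xx}$ as an independent entry of $\eta_2$, integrating only the $c$-term to $z_x^2$, so that the delayed term, the Young residue and the Wirtinger residue land in precisely the slots $\theta_{33}$, $R$ and \dref{ke} prescribed by the statement, with no leftover indefinite terms.
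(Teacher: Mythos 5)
Your part (i) reproduces the paper's proof essentially verbatim: the same functional $V_1$, the same integration by parts turning $\frac{d}{dt}p_2\|z_x\|^2$ into $-2p_2\int_0^1 z_{xx}z_t\,dx$ under \dref{ne}, the same choice to integrate only the $c$-term to $-2p_2c\|z_x\|^2$ while keeping the delayed and source terms in $z_{xx}$ form, the same Agmon and Wirtinger slack terms \dref{lin}, \dref{zhi} with multipliers $\lambda$, $\lambda_1$, the same Young bounds producing the weights $p_1r^{-1}$ and $p_2r_1^{-1}$ on $R$, the same vectors $\eta_1,\eta_2$, the same residual $z_x^2$ coefficient giving \dref{ke}, and the same Halanay conclusion with the $\bar\delta_0=\delta_0+\epsilon$ perturbation for the case $\delta_0=\delta_1$. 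This part is correct and is the paper's argument.

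In part (ii) there is one slip: you propose to take $r_1$ large in order to make $p_2r_1^{-1}R$ negligible in \dref{yyyy}. But the statement constrains $0<r_1<2$, and for good reason: if $r_1\ge 2$ then $\theta_{33}=-(2-r_1)p_2+\lambda_1\ge \lambda_1\ge 0$ (since $p_2>0$), so the lower-right $2\times 2$ block of $\bar\Theta_2$ cannot be negative definite and \dref{ppp} is infeasible no matter how large $c$ is. This contradicts the very requirement you impose two lines later when treating $\bar\Theta_2$. The repair is already contained in your own argument: fix $r_1\in(0,2)$ (the paper takes $r_1=1$) and use the smallness of $p_2$ — which you invoke anyway — so that $p_2r_1^{-1}R$ is a negligible perturbation of the strict inequality $\Theta_1<0$; only $r$ needs to be taken large, exactly as in part (ii) of Proposition \ref{ni}. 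With that one-line correction, your part (ii) coincides with the paper's proof.
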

			    		  		\begin{proof}	
			    		  	(\rm i)	Taking the time derivative of the Lyapunov function along the solution of \dref{o} subject to \dref{ne}, and from \dref{w} we get
                            	\begin{equation}\label{www}
			    		  		\begin{array}{ll}
			    		  		&\dot{V}_1(t)+2\delta_0V_1(t)-2\delta_1\sup\limits_{-h\le\theta\le 0}{V}_1(t+\theta)\\&\le \disp-2p_1\int_0^1z_x^2(x,t)dx+2a_2p_1\int_0^1z(x,t)z(x,t-\tau(t))dx
			    		  		-2p_1c\int_0^1z^2(x,t)dx\\&\disp-2p_2c\int_0^1z_x^2(x,t)dx
			    		  		\disp -2p_2\int_0^1z_{xx}^2(x,t)dx-2a_2p_2\int_0^1z_{xx}(x,t)z(x,t-\tau(t))dx\\
			    		  		&-2\disp\int_0^1[p_1z(x,t)-p_2z_{xx}(x,t)][\gamma(x)-\disp\int_0^xq(x,y)\gamma(y)dy]dx(A_1-a_2I)X(t-\tau(t))\\
			    		  		&+X^\top(t)[P(A+BK)+(A+BK)^\top P]X(t)\disp+2X^\top(t) PBz(0,t)+2X^\top(t) PA_1X(t-\tau(t))\\
			    		  		&\disp+2\delta_0p_1\int_0^1z^2(x,t)dx+2\delta_0p_2\int_0^1z_x^2(x,t)dx\disp+2\delta_0X^\top(t)PX(t)-2\delta_1X^\top(t-\tau(t))PX(t-\tau(t))\\&\disp-2\delta_1p_2\int_0^1z_x^2(x,t-\tau(t))dx-2\delta_1p_1\int_0^1z^2(x,t-\tau(t))dx.
			    		  		\end{array}
			    		  		\end{equation}
			    		  		From Young's inequality, we have \dref{new} and
			    		  		\begin{equation}
			    		  		\begin{array}{ll}
			    		  		&\disp2\int_0^1p_2z_{xx}(x,t)[\gamma(x)-\int_0^xq(x,y)\gamma(y)dy]dx(A_1-a_2I)
			    		  	X(t-\tau(t))\\
			    		  		&\disp\le p_2[r_1\int_0^1z_{xx}^2(x,t)dx+r_1^{-1}X^\top(t-\tau(t))SX(t-\tau(t))],
			    		  		\end{array}
			    		  		\end{equation}
			    		  		where  $r_1>0$ and $S$ is defined by \dref{new3}.\\
			    		  		By using Agmon's and Wirtinger's inequalities, we have
			    		  		$$
			    		  		|z(0,t)|^2\le 2\|z\|^2+\|z_x\|^2,	
			    		  		$$
			    		  		$$
			    		  		\|z_x\|^2\le \dfrac{4}{\pi^2}\|z_{xx}\|^2.
			    		  		$$
			    		  		Hence,
			    		  		\begin{equation}\label{lin}
			    		  		0\le\lambda[ \|z_x\|^2+2\|z\|^2-|z(0,t)|^2],
			    		  		\end{equation}
			    		  		\begin{equation}\label{zhi}
			    		  		0\le \lambda_1[\|z_{xx}\|^2-\dfrac{\pi^2}{4}\|z_x\|^2],
			    		  		\end{equation}
			    		  	where $\lambda,\lambda_1>0$ are some constants.
			    		  	
			    		  	 We add \dref{lin} and \dref{zhi} to \dref{www}.
			    		  		Set $\eta_1(t)={\rm col}\{X(t),X(t-\tau(t)),z(0,t)\}$,  $\eta_2(t)={\rm col}\{z(x,t),\\z(x,t-\tau(t)),z_{xx}(x,t)\}$. Let $\Theta_1$ be defined by \dref{zz} and $R$ by \dref{p}. Then we obtain
			    		  		\begin{equation}
			    		  		\begin{array}{ll}
			    		  		&\dot{V}_1(t)+2\delta_0V_1(t)-2\delta_1\sup\limits_{-h\le\theta\le 0}{V}_1(t+\theta)\\&\le \sum\limits_{i=1}^2\disp\int_0^1\eta_i^\top(t) \bar\Theta_i\eta_i(t)dx-(2p_1+2p_2c-\lambda-2\delta_0p_2+\dfrac{\pi^2}{4}\lambda_1)\disp\int_0^1z_x^2(x,t)dx\le 0
			    		  		\end{array}
			    		  		\end{equation}
			    		  		if the LMIs $\bar\Theta_1< 0$, $\bar\Theta_2< 0$ are feasible and the inequality \dref{ke} holds. Application of Halanay's inequality, completes the proof of ({\rm i}).

			    		  		(\rm ii)  By (ii) of Proposition \ref{ni}, $\Theta_1<0$ is feasible for given $0<\delta_1<\delta_0$ and appropriate $K$.  Then  for $r_1=1$ and small enough $p_2>0$,  $\bar \Theta_1<0$ is feasible. \\
			    		  	Now given $0<\delta_1<\delta_0$,  $\lambda>0$,  $p_1=1$, $p_2>0$, and $\lambda_1\geq 0$ such that $\theta_{33}<0$, we show that \dref{ppp}  and \dref{ke} are feasible for appropriate choice of large enough $c>0$. 
			    		  	For \dref{ke}, this is evident. For \dref{ppp}, this is true by Schur complements theorem. 
			    		  		 \end{proof}

			    		  			\begin{remark}
			    		  				 For simplicity only, in the cascade model
			    		  				we consider a constant coefficient $a$ of the undelayed term $au(x,t)$.
			    		  				For the variable $a(s)$, one have to modify kernels of the transformations similarly to \cite{Tomoaki}. Halanay's inequality is applicable for the resulting target system.
			    		  			\end{remark}
			    		  			\section{Control under saturation: regional stabilization}
			    		  				In this section, we consider the system \dref{an} with the control law which is subject to the following amplitude constraint:
			    		  				\begin{equation}
			    		  				|U(t)|\le \bar{u}.
			    		  				\end{equation}	
			    		  				Denoting the state trajectory of \dref{an} subject to Dirichlet or Neumann boundary actuation with the initial condition  $(X_0,u_0)\triangleq (f(\theta),\psi(\cdot,\theta))\in W_1$ by $(X(t;X_0),u(x,t;u_0))$.
			    		  				
			    		  					For the case of Dirichlet actuation, the domain of attraction  of the closed-loop original system is then the set
			    		  					\begin{equation}
			    		  					\label{we}
			    		  					\tilde{\mathcal{S}}=\left\{(X_0,u_0)\in W_1:\lim_{t\to \infty}\|(X(t;X_0),u(x,t;u_0))\|_{\mathcal H}=0\right\}.
			    		  				    \end{equation}	
			    		  					For the case of Neumann actuation, the domain of attraction  of the closed-loop original system is given by \dref{we}, where $\mathcal H$ is replaced by $\mathcal H_1$.
			    		  			\subsection{Dirichlet control under saturation}
			    		  			 We first find domain of attraction for the closed-loop target system. Denoting the state trajectory of closed-loop target system with the initial condition  $(X_0,z_0)\triangleq (f(\theta),\varphi(\cdot,\theta))\in W$ by $(X(t;X_0),z(x,t;z_0))$, the domain of attraction  of the closed-loop target system is then the set
			    		  				$$
			    		  				\mathcal{S}=\left\{(X_0,z_0)\in W:\lim_{t\to \infty}\|(X(t;X_0),z(x,t;z_0))\|_{\mathcal H}=0\right\}.
			    		  				$$	
			    		  			 We will obtain an estimate $\mathcal{X}_\beta\subset \mathcal{S}$ on the domain of attraction, where
			    		  			 $$
			    		  			 \mathcal{X}_\beta=\left\{(X_0,z_0)\in W:\max_{[-h,0]}|X_0|^2+\max_{[-h,0]}\|z_0\|^2\le \beta^{-1}\right\},
			    		  			$$
			    		  			 $\beta>0$ is a scalar that will be minimized in the sequel.
			    		  			
			    		  			 We design the state feedback controller in the following form:
			    		  			 \begin{equation}\label{s}
			    		  			 U_{sat}(t)={\rm sat}(U(t),\bar{u}),
			    		  			 \end{equation}
			    		  			 where $U(t)$ is given by \dref{con}.\\
			    		  			 Applying the latter control law \dref{s}, we represent the saturated closed-loop target system as the system \dref{o} with the following boundary condition:
			    		  	          \begin{equation}\label{sat}
			    		  			  z(1,t)={\rm sat}(U(t),\bar{u})-U(t).
			    		  			  \end{equation}	
			    		  		From \dref{con}, $U(t)$ admits the following representation:
			    		  		$$
			    		  		\begin{array}{ll}
			    		  		U(t)
%			    		  		&\disp=\int_0^1k(1,y)u(y,t)dy+\gamma(1)X(t)\\&\disp+\int_0^1q(1,y)w(y,t)dy\\
			    		  		&\disp=\int_0^1n(1,y)w(y,t)dy+\psi(1)X(t)+\int_0^1l(1,y)z(y,t)dy\\
			    		  		&\disp=\int_0^1n(1,y)\left[z(y,t)+\int_0^yl(y,s)z(s,t)ds\right]dy
			    		  		+\psi(1)X(t)+\int_0^1l(1,y)z(y,t)dy,
			    		  		\end{array}
			    		  	$$
			    		  		provided saturation is avoided.\\
			    		  			Denote
			    		  			$$
			    		  			\begin{array}{ll}
			    		  			c_1=|\psi(1)|,
			    		  			c_2=\max\limits_{0\le y\le 1}|n(1,y)|\left(1+\max\limits_{0\le y\le x\le 1}|l(x,y)|\right)+\max\limits_{0\le y\le 1}|l(1,y)|.
			    		  			\end{array}	
			    		  			$$
			    		  		Due to \dref{xia} and \dref{n}, $n(x,y)$ and $l(x,y)$ are continuous functions bounded on any compact. Then Jensen's inequality implies
			    		  		$$
			    		  		\begin{array}{ll}
			    		  		|U(t)|&\le c_1|X|+c_2\|z\|.
			    		  		\end{array}
			    		  		$$
			    		  		Applying Young's inequality,
			    		  		we obtain
			    		  		\begin{equation}\label{2}
			    		  		\begin{array}{ll}
			    		  		|U(t)|^2&\le 2c_1^2|X|^2+2c_2^2\|z\|^2.
			    		  		\end{array}
			    		  		\end{equation}
			    		  		
			    		  		Given $\bar u>0$, we define the following set:
			    		  		\begin{equation}\label{1}
			    		  		\mathcal{L}(c_1,c_2,\bar u)=\left\{(X,z)\in \mathcal H:c_1^2|X|^2+ c_2^2\|z\|^2\le \dfrac{\bar{u}^2}{2}\right\}.
			    		  		\end{equation}
			    		  		From the inequality \dref{2} and the definition \dref{1}, we can obtain the following implication:
			    		  		if $(X,z) \in \mathcal{L}(c_1,c_2,\bar u)$, then $|U(t)|\le \bar{u}$, and the saturation is avoided. Thus, the system  \dref{o} subject to \dref{sat} admits the linear representation \dref{o} subject to \dref{d}.
			    		  		
			    		  		From  Proposition \ref{ni}, we find that if there exist $0<\delta_1=\delta_0$ such that the strict LMIs \dref{zz}, \dref{zzz} are feasible, then the following inequality holds
			    		  		$$
			    		  		\begin{array}{ll}
			    		  		X^\top(t) PX(t)+{p_1}\disp\int_0^1z^2(x,t)dx	=V(t)\le \sup\limits_{-h\le\theta\le 0}{V}(\theta)\le
			    		  		\lambda_{\max}(P)\max\limits_{[-h,0]}|X_0|^2+p_1\max\limits_{[-h,0]}\|z_0\|^2,\\\;\hspace{12cm}\forall t\geq 0.
			    		        \end{array}
			    		  		$$	
			    		  		Hence, the inequalities: \begin{equation}\label{qi}
			    		  	P\le \beta I,\; p_1\le \beta
			    		  		\end{equation}
			    		  		guarantee that the trajectories $(X(t;X_0),z(x,t;z_0))$ starting from initial function $(X_0,z_0)\in \mathcal{X}_\beta$ remain within $\mathcal{X}_z$,
			    		  		where
			    		  		$$
			    		  		\mathcal{X}_z=\left\{(X,z)\in \mathcal H: X^\top(t) PX(t)+p_1\disp\int_0^1z^2(x,t)dx\le 1\right\}.
			    		  		$$
			    		  		The ``ellipsoid" $\mathcal{X}_z$ is contained in $\mathcal{L}(c_1,c_2,\bar u)$, if the following implication holds
			    		  		$$
			    		  		\begin{array}{ll}
			    		  		X^\top(t) PX(t)+p_1\disp\int_0^1z^2(x,t)dx\le1  \Longrightarrow c_1^2|X(t)|^2+c_2^2\|z(x,t)\|^2\le \dfrac{\bar{u}^2}{2}
			    		  		\end{array}
			    		  		$$
			    		  		for all $(X(t), z(x,t))$, i.e. if
			    		  		$$
			    		  		\begin{array}{ll}
			    		  		c_1^2|X(t)|^2+c_2^2\|z(x,t)\|^2\disp\le \dfrac{\bar{u}^2}{2}\left[X^\top(t) PX(t)+p_1\int_0^1z^2(x,t)dx\right].
			    		  		\end{array}
			    		  		$$
			    		  		The latter inequality is guaranteed if
			    		  		\begin{equation}\label{ww}
			    		  		P\dfrac{\bar u^2}{2}-c_1^2I\geq 0,\; p_1\dfrac{\bar u^2}{2}-c_2^2\geq 0.
			    		  		\end{equation}
			    		  		Therefore, the inequalities \dref{ww} guarantee the saturation avoidance, and together with Proposition \ref{ni} and condition \dref{qi} imply that
			    		  		$$
			    		  		\lim\limits_{t\to\infty}\|(X(t;X_0),z(x,t;z_0))\|_{\mathcal H}=0.
			    		  		$$
			    		  		
			    		  			Returning to the original system by the transformation \dref{wei} and \dref{lyp}, we have
			    		  			\begin{equation}\label{pq}
			    		  			\|z\|\le \left[1+\max\limits_{0\le y\le x\le 1}|q(x,y)|\right]\|w\|,
			    		  			\end{equation}
			    		  			\begin{equation}\label{pp}
			    		  			\|w\|\le \left[1+\max\limits_{0\le y\le x\le 1}|k(x,y)|\right]\|u\|+\left[\max\limits_{0\le x\le 1}|\gamma(x)|\right]|X|.
			    		  			\end{equation}
			    		  			Hence,
			    		  			\begin{equation}\label{yyl}
			    		  			|X|^2+\|z\|^2\le  M_1|X|^2+M_2\|u\|^2,
			    		  			\end{equation}
			    		  			where
			    		  			$$
			    		  			\begin{array}{ll}
			    		  			M_1=1+2\left[\max\limits_{0\le x\le 1}|\gamma(x)|\left(1+\max\limits_{0\le y\le x\le 1}|q(x,y)|\right)\right]^2,\\
			    		  			M_2=2\left[1+\max\limits_{0\le y\le x\le 1}|k(x,y)|\right]^2\left[1+\max\limits_{0\le y\le x\le 1}|q(x,y)|\right]^2.
			    		  			\end{array}
			    		  			$$
		    		  				Denote
			    		  			$$
			    		  				\begin{array}{ll}
			    		  				\mathcal{X}_{u}=\{(X_0,u_0)\in W_1:M_1\max\limits_{[-h,0]}|X_0|^2+M_2\max\limits_{[-h,0]}\|u_0\|^2
			    		  				\le \beta^{-1}\}.
			    		  				\end{array}
			    		  				$$
			    		  				It follows from the inequality \dref{yyl} that if the initial function of system \dref{an} with the Dirichlet boundary actuation \dref{s} satisfies $(X_0,u_0)\in \mathcal{X}_{u}$, then by backstepping transformation, the initial function of target system \dref{o} subject to \dref{sat}
			    		  				satisfies $(X_0,z_0)\in \mathcal{X}_\beta$. The following is thus obtained:
			    		  				\begin{theorem}\label{3.2}
			    		  				 	Given gains $K$ and $c$,  and tuning parameters $r>0$, $0<\delta_1=\delta_0$, let there exist an $n\times n$ matrix  $P>0$ and scalars $p_1>0$, $0\le\lambda\le 2p_1$  that satisfy the strict LMIs \dref{zz}, \dref{zzz}, \dref{qi} and \dref{ww}. Then for all $h_0>0$ and $h>0$, the classical solutions of \dref{an} with  Dirichlet boundary actuation \dref{s} starting from initial functions $(X_0,u_0)\in  \mathcal{X}_{u}$ converge to zero for all delays $\tau$ subject to \dref{ta}, i.e.
			    		  					$$
			    		  					\lim_{t\to \infty}\|(X(t;X_0),u(x,t;u_0))\|_{\mathcal H}=0.
			    		  					$$
			    		  				\end{theorem}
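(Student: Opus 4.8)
The plan is to assemble the ingredients prepared in the paragraphs preceding the statement into a single invariance-plus-convergence argument, carried out in the target coordinates and then pushed back to the original ones. The first step is to pass from the original initial data to the target initial data. Given $(X_0,u_0)\in\mathcal{X}_u$, I would use the estimate \dref{yyl} together with the definitions of $M_1,M_2$ to conclude that the corresponding target-system initial data satisfies $\max_{[-h,0]}|X_0|^2+\max_{[-h,0]}\|z_0\|^2\le\beta^{-1}$, i.e. $(X_0,z_0)\in\mathcal{X}_\beta$. Well-posedness (Section 3) guarantees a classical solution for such data, so all the manipulations below are legitimate.

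The heart of the proof, and the step I expect to be the main obstacle, is to show that the saturation never activates along the trajectory, so that the closed-loop system \dref{o} subject to \dref{sat} genuinely coincides with the linear system \dref{o} subject to \dref{d}. The difficulty is an apparent circularity: the Halanay estimate $V(t)\le\sup_{[-h,0]}V(\theta)$ is available only while the dynamics are linear, that is while $(X(t),z(x,t))\in\mathcal{L}(c_1,c_2,\bar u)$, yet remaining in $\mathcal{L}$ is precisely what we are trying to establish. I would close this loop by a continuity/bootstrap argument. Using \dref{qi} one first bounds the initial energy, $\sup_{[-h,0]}V(\theta)\le\beta(\max_{[-h,0]}|X_0|^2+\max_{[-h,0]}\|z_0\|^2)\le 1$, so the trajectory starts inside the sublevel set $\mathcal{X}_z=\{V\le 1\}$. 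By \dref{ww} we have the inclusion $\mathcal{X}_z\subset\mathcal{L}(c_1,c_2,\bar u)$, hence on any maximal time interval on which the trajectory stays in $\mathcal{X}_z$ the saturation is inactive and the dynamics are the linear ones; then Proposition \ref{ni} with $\delta_0=\delta_1$ yields $V(t)\le\sup_{[-h,0]}V(\theta)\le 1$, so the trajectory cannot leave $\mathcal{X}_z$. Continuity of $t\mapsto V(t)$ then rules out escape at the right endpoint of that interval, so the interval is all of $[0,\infty)$ and the saturation is avoided globally in time.

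Once saturation is excluded, the closed-loop target dynamics are exactly the linear system \dref{o}, \dref{d}, and the \emph{moreover} part of Proposition \ref{ni} (feasibility of the strict LMIs \dref{zz}, \dref{zzz} with $\delta_0=\delta_1$) gives exponential stability with some positive decay rate, whence $\lim_{t\to\infty}\|(X(t;X_0),z(x,t;z_0))\|_{\mathcal H}=0$. It remains to transfer this to the original variables. Since the inverse transformations \dref{h} and \dref{l} have kernels $n,\psi,l$ that are continuous, and therefore bounded on $[0,1]^2$, the map $(X,z)\mapsto(X,u)$ is bounded on $\mathcal H$; applying this bound (equivalently, reading \dref{pq}, \dref{pp} in the reverse direction) shows that convergence of $(X,z)$ to zero in $\mathcal H$ forces $\|(X(t;X_0),u(x,t;u_0))\|_{\mathcal H}\to 0$, which is the assertion. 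Finally, the conclusion holds for all $h_0>0$, $h>0$ and every admissible $\tau(t)\in[h_0,h]$ because the pointwise differential inequality underlying Proposition \ref{ni} with $\delta_0=\delta_1$ is satisfied uniformly in the delay, so the invariance and decay arguments above are delay-independent.
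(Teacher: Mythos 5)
Your proposal is correct and follows essentially the same route as the paper: map $\mathcal{X}_u$ into $\mathcal{X}_\beta$ via \dref{yyl}, use \dref{qi} together with Proposition \ref{ni} (strict LMIs \dref{zz}, \dref{zzz} with $\delta_0=\delta_1$) to keep the trajectory inside $\mathcal{X}_z$, use \dref{ww} for the inclusion $\mathcal{X}_z\subset\mathcal{L}(c_1,c_2,\bar u)$ so that saturation is avoided and the closed loop coincides with \dref{o}, \dref{d}, and then return to the original variables through the inverse transformations \dref{h}, \dref{l}. The only difference is that you make explicit the continuity/bootstrap argument resolving the circularity between saturation avoidance and the Halanay bound (and the boundedness of the inverse transformation in the final step), both of which the paper asserts without detailed proof --- a refinement in rigor rather than a different method.
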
	
			    		  				\subsection{Neumann control under saturation}
			    		  			 For the case of Neumann actuation, the domain of attraction  of the closed-loop target system is the set
			    		  				$$
			    		  					\mathcal{S}=\left\{(X_0,z_0)\in W:\lim_{t\to \infty}\|(X(t;X_0),z(x,t;z_0))\|_{\mathcal H_1}=0\right\}.
			    		  					$$	
			    		  				 We will obtain an estimate $\mathcal{X}_\beta\subset \mathcal{S}$ of the domain of attraction, where
			    		  				$$
			    		  				\begin{array}{ll}
			    		  				\mathcal{X}_\beta=\left\{(X_0,z_0)\in W:\max\limits_{[-h,0]}|X_0|^2+\max\limits_{[-h,0]}\|z_0\|^2+\max\limits_{[-h,0]}\|z_0^\prime\|^2\le \beta^{-1}\right\},
			    		  				\end{array}
			    		  				$$
			    		  				 $\beta>0$ is a scalar that will be minimized in the sequel.
			    		  				
			    		  				Then we design the state feedback controller in the following form
			    		  				\begin{equation}\label{as}
			    		  				U_{sat}(t)={\rm sat}(U(t),\bar{u}),
			    		  				\end{equation}
			    		  				where $U(t)$ is given by \dref{conc}.\\
			    		  				Applying the latter control law \dref{as}, we represent the saturated closed-loop target system into the system \dref{o} with the following boundary condition:
			    		  				\begin{equation}\label{lypin}
			    		  				z_x(1,t)={\rm sat}(U(t),\bar{u})-U(t).
			    		  				\end{equation}	
			    		  				In this case, from \dref{conc}, $U(t)$ admits the following representation:
			    		  				$$
			    		  				\begin{array}{ll}
			    		  				U(t)
			    		  				&\disp=\int_0^1n_x(1,y)w(y,t)dy+\psi^\prime(1)X(t)
			    		  				\disp+l(1,1)z(1,t)+\int_0^1l_x(1,y)z(y,t)dy\\
			    		  				&\disp=\int_0^1n_x(1,y)\left[z(y,t)+\int_0^yl(y,s)z(s,t)ds\right]dy+\psi^\prime(1)X(t)+l(1,1)z(1,t)\\
			    		  				&\disp+\int_0^1l_x(1,y)z(y,t)dy.
			    		  				\end{array}
			    		  				$$
			    		  				Here we use the fact that $n(1,1)=0$.
			    		  				
			    		  				Denote that
			    		  				$$
			    		  				\xi\triangleq\max\limits_{0\le y\le 1}|n_x(1,y)|(1+\max\limits_{0\le x\le y\le 1}|l(x,y)|)+\max\limits_{0\le y\le 1}|l_x(1,y)|.
			    		  			$$
			    		  				Applying Jensen's and Young's inequalities, we obtain
			    		  				$$
			    		  				\begin{array}{ll}
			    		  				|U(t)|&\le |l(1,1)||z(1,t)|+|\psi^\prime(1)||X(t)|+\xi\|z(x,t)\|.
			    		  				\end{array}
			    		  				$$
			    		  				By using Agmon's inequality, we have
			    		  				$$
			    		  				|z(1,t)|^2\le 2\|z(x,t)\|^2+\|z_x(x,t)\|^2.
			    		  				$$
			    		  				Denote that
			    		  				$$
			    		  				\begin{array}{ll}
			    		  				c_1=|\psi^\prime(1)|,\;c_2=\sqrt{2}|l(1,1)|+\xi,\;c_3=|l(1,1)|.
			    		  				\end{array}	
			    		  				$$
			    		  				Then,
			    		  				\begin{equation}\label{11}
			    		  				|U(t)|^2\le 3\left[c_1^2|X|^2+c_2^2\|z\|^2+c_3^2\|z_x\|^2\right].
			    		  				\end{equation}
			    		  				
			    		  				Given $\bar u>0$, we define the following set:
			    		  				\begin{equation}\label{10}
			    		  				\begin{array}{ll}
			    		  				\mathcal{L}(c_1,c_2,c_3,\bar u)=&\left\{(X,z)\in \mathcal H_1:c_1^2|X|^2+ c_2^2\|z\|^2\right.\\
			    		  			&\left.	+c_3^2\|z_x\|^2\le \dfrac{\bar{u}^2}{3}\right\}.
			    		  				\end{array}
			    		  				\end{equation}
			    		  				From the inequality \dref{11} and the definition \dref{10}, we can obtain:
			    		  				if $(X,z) \in \mathcal{L}(c_1,c_2,c_3,\bar u)$, then $|U(t)|\le \bar{u}$, and the saturation is avoided. Thus, the system \dref{o} subject to \dref{lypin} admits the linear representation \dref{o} subject to \dref{ne}.
			    		  				
			    		  				From Proposition \ref{nii}, we find that if there exist $0<\delta_1=\delta_0$ such that the LMIs \dref{yyyy}, \dref{ppp} and \dref{ke} are feasible, then the following inequality holds:
			    		  				$V(t)\le\sup\limits_{-h\le\theta\le 0}{V}(\theta)$, i.e. for all $ t\geq 0$,
			    		  				$$
			    		  				\begin{array}{ll}
			    		  					X^\top PX+{p_1}\|z\|^2+{p_2}\|z_x\|^2\le
			    		  					\lambda_{\max}(P)\max\limits_{[-h,0]}|X_0|^2+{p_1}\max\limits_{[-h,0]}\|z_0\|^2+{p_2}\max\limits_{[-h,0]}\|z_0^\prime\|^2.
			    		  					\end{array}
			    		  				$$
			    		  				Hence, the inequalities: \begin{equation}\label{qiqi}
			    		  					P\le \beta I,\;{p_1}\le \beta,\; {p_2}\le \beta
			    		  				\end{equation}
			    		  				guarantee that the trajectories $(X(t;X_0),z(x,t;z_0))$ starting from initial function $(X_0,z_0)\in \mathcal{X}_\beta$ remain within $\mathcal{X}_z$,
			    		  				where
			    		  				$$
			    		  					\mathcal{X}_z=\left\{(X,z)\in \mathcal H_1: X^\top PX+{p_1}\|z\|^2+{p_2}\|z_x\|^2\le 1\right\}.
			    		  				$$
			    		  				Note that the ellipsoid $\mathcal{X}_z$ is contained in $\mathcal{L}(c_1,c_2,c_3,\bar u)$, if the following implication holds
			    		  				$$
			    		  				\begin{array}{ll}
			    		  					X^\top PX+{p_1}\|z\|^2+{p_2}\|z_x\|^2\le 1 \Longrightarrow c_1^2|X|^2+c_2^2\|z\|^2+c_3^2\|z_x\|^2\le \dfrac{\bar{u}^2}{3}
			    		  					\end{array}
			    		  				$$
			    		  				for all $(X(t), z(x,t))$, i.e. if
			    		  				$$
			    		  				\begin{array}{ll}
			    		  					c_1^2|X|^2+c_2^2\|z\|^2+c_3^2\|z_x\|^2\le \dfrac{\bar{u}^2}{3}\left[X^\top PX+{p_1}\|z\|^2+{p_2}\|z_x\|^2\right].
			    		  					\end{array}
			    		  				$$
			    		  				The latter inequality is guaranteed if
			    		  				\begin{equation}\label{wwww}
			    		  					P\dfrac{\bar u^2}{3}-c_1^2I\geq 0,\; p_1\dfrac{\bar u^2}{3}-c_2^2\geq 0,\; p_2\dfrac{\bar u^2}{3}-c_3^2\geq 0.
			    		  				\end{equation}	
			    		  		Therefore, the LMIs \dref{wwww} guarantee the saturation avoidance, and together with Proposition \ref{nii}
			    		  		and the condition \dref{qiqi} imply that
			    		  		$$\lim\limits_{t\to\infty}\|(X(t;X_0),z(x,t;z_0))\|_{\mathcal H_1}=0.$$
			    		  		
			    		  		Returning to the original system by the transformation \dref{wei} and \dref{lyp}, we obtain that
			    		  		$$
			    		  		\begin{array}{ll}
			    		  		\|z_x\|&\le \|u_x\|+\max\limits_{0\le y\le x\le 1}|k_x(x,y)|\|u\|+\max\limits_{0\le x\le 1}|\gamma^\prime(x)||X|\\&+\left[\max\limits_{0\le x\le 1}|q(x,x)|+\max\limits_{0\le y\le x\le 1}|q_x(x,y)|\right]\|w\|.
			    		  		\end{array}
			    		  		$$
			    		  		It follows from \dref{pq} and \dref{pp} that
			    		  		$$
			    		  		\begin{array}{ll}
			    		  		|X|^2+\|z\|^2+\|z_x\|^2\le  M_1|X|^2+M_2\|u\|^2+4\|u_x\|^2,
			    		  		\end{array}
			    		  		$$
			    		  		where $$
			    		  		\begin{array}{ll}
			    		  		M_1&=\left\{8\left[\max\limits_{0\le x\le 1}|q(x,x)|+\max\limits_{0\le y\le x\le 1}|q_x(x,y)|\right]^2+2\left[1+\max\limits_{0\le y\le x\le 1}|q(x,y)|\right]^2\right\}\left[\max\limits_{0\le x\le 1}|\gamma(x)|\right]^2\\&+4\max\limits_{0\le x\le 1}|\gamma^\prime(x)|^2+1,
			    		  		\end{array}
			    		  		$$
			    		  		$$
			    		  		\begin{array}{ll}
			    		  		M_2&= 8\left[\max\limits_{0\le x\le 1}|q(x,x)|+\max\limits_{0\le y\le x\le 1}|q_x(x,y)|\right]^2\left[1+\max\limits_{0\le y\le x\le 1}|k(x,y)|\right]^2+4\max\limits_{0\le y\le x\le 1}|k_x(x,y)|^2\\
			    		  		&+2\left[1+\max\limits_{0\le y\le x\le 1}|k(x,y)|\right]^2\left[1+\max\limits_{0\le y\le x\le 1}|q(x,y)|\right]^2.
			    		  		\end{array}
			    		  		$$
			    		  		Denote
			    		  		$$
			    		  		\begin{array}{ll}
			    		  		\mathcal{X}_{u}=&\left\{(X_0,u_0)\in W_1:M_1\max\limits_{[-h,0]}|X_0|^2+M_2\max\limits_{[-h,0]}\|u_0\|^2
			    		  		+4\max\limits_{[-h,0]}\|u_0^\prime\|^2\le \beta^{-1}\right\}.
			    		  		\end{array}
			    		  		$$
			    		  		Then, we obtain the following result:
			    		  		\begin{theorem}\label{3.4}Given gains $K$ and $c$,  and tuning parameters $r>0$, $0<r_1<2$, $0<\delta_1=\delta_0$, let there exist an $n\times n$ matrix  $P>0$, and scalars $p_1>0$, $p_2>0$, $\lambda>0$ and $\lambda_1\geq0$ that satisfy the LMIs \dref{yyyy}, \dref{ppp}, \dref{ke}, \dref{qiqi} and \dref{wwww}. Then for all $h_0>0$ and $h>0$, the classical solutions of \dref{an} with Neumann boundary actuation \dref{as} starting from initial functions $(X_0,u_0)\in  \mathcal{X}_{u}$ converge to zero
			    		  			for all delays $\tau$ subject to \dref{ta}, i.e.
			    		  			$$
			    		  			\lim_{t\to \infty}\|(X(t;X_0),u(x,t;u_0))\|_{\mathcal H_1}=0.
			    		  			$$
			    		  		\end{theorem}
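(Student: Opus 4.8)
The plan is to reproduce, in the Neumann/$\mathcal H_1$ setting, the invariant-ellipsoid argument already used for the Dirichlet case in Theorem~\ref{3.2}: I would construct a positively invariant set $\mathcal X_z$ in $\mathcal H_1$ on which the saturation never activates, so that the saturated closed-loop target system \dref{o}, \dref{lypin} coincides with the \emph{linear} system \dref{o}, \dref{ne}, to which Proposition~\ref{nii} applies. Convergence in target coordinates is then pulled back to the original coordinates through the inverse backstepping transformations.

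First I would establish invariance. With $\delta_0=\delta_1>0$, Proposition~\ref{nii} together with Halanay's inequality (Lemma~\ref{H}) gives $V_1(t)\le\sup_{-h\le\theta\le0}V_1(\theta)$ as long as the trajectory stays in $\mathcal X_z$. Using \dref{qiqi} and the definition of $\mathcal X_\beta$, an initial function $(X_0,z_0)\in\mathcal X_\beta$ forces $\sup_{-h\le\theta\le0}V_1(\theta)\le 1$, hence $V_1(0)\le 1$ and $(X(0),z(\cdot,0))\in\mathcal X_z$. Next I would use \dref{wwww} to show $\mathcal X_z\subset\mathcal L(c_1,c_2,c_3,\bar u)$, so that on $\mathcal X_z$ the bound \dref{11} yields $|U(t)|\le\bar u$ and the saturation is inactive; there the saturated dynamics \dref{lypin} reduce to \dref{ne}. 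The closing of the loop is a continuity (maximal-interval) argument: on any interval where $(X,z)\in\mathcal X_z$ the dynamics are linear and $V_1$ does not exceed $1$, so the trajectory cannot leave $\mathcal X_z$; by continuation this holds for all $t\ge0$, saturation is never triggered, and the linear representation is globally valid. Finally, perturbing $\delta_0$ slightly above $\delta_1$ (as in Proposition~\ref{nii}(i)) makes the strict LMIs feasible with $\bar\delta_0>\bar\delta_1$, so Halanay's inequality now delivers a genuine exponential decay and $\|(X(t),z(\cdot,t))\|_{\mathcal H_1}\to0$.

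It remains to translate this to the original system. I would invoke the invertibility and continuity of the composite transformation \dref{wei}, \dref{lyp}, together with the $\mathcal H_1$-norm estimates already derived (the chain \dref{pq}, \dref{pp} and the accompanying bound on $\|z_x\|$ in terms of $\|u_x\|$, $\|u\|$, $|X|$), to obtain $|X|^2+\|z\|^2+\|z_x\|^2\le M_1|X|^2+M_2\|u\|^2+4\|u_x\|^2$. This inequality shows $(X_0,u_0)\in\mathcal X_u\Rightarrow(X_0,z_0)\in\mathcal X_\beta$, so the hypotheses place the target trajectory in the invariant ellipsoid; and the same transformation, read backwards, turns $\|(X,z)\|_{\mathcal H_1}\to0$ into $\|(X,u)\|_{\mathcal H_1}\to 0$, which is the assertion.

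The main obstacle I anticipate is the self-referential nature of the invariance step: the Lyapunov bound $V_1(t)\le1$ is only justified while saturation is inactive, yet it is precisely this bound that guarantees saturation stays inactive. Making this rigorous requires the maximal-interval/continuity argument above rather than a direct estimate. The Neumann-specific difficulty is the presence of $\|z_x\|$: it forces $p_2>0$, the use of Agmon's inequality for $|z(1,t)|^2$ in deriving \dref{11}, the extra $z_{xx}$ block in \dref{ppp} and the side inequality \dref{ke}, and heavier back-transformation estimates than in the Dirichlet case, all of which must be kept consistent for the ellipsoid inclusion $\mathcal X_z\subset\mathcal L$ to close.
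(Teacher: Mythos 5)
Your proposal is correct and follows essentially the same route as the paper: the sets $\mathcal{X}_\beta$, $\mathcal{X}_z$, $\mathcal{L}(c_1,c_2,c_3,\bar u)$, the use of Proposition \ref{nii} with $\delta_0=\delta_1$ (and its perturbation to $\bar\delta_0>\bar\delta_1$) via Halanay's inequality, the saturation-avoidance implication from \dref{qiqi} and \dref{wwww}, and the back-transformation estimate $|X|^2+\|z\|^2+\|z_x\|^2\le M_1|X|^2+M_2\|u\|^2+4\|u_x\|^2$ are exactly the paper's steps. Your explicit maximal-interval/continuation argument resolving the circularity between ``$V_1\le 1$'' and ``saturation inactive'' is a point the paper leaves implicit, so it is a welcome sharpening rather than a deviation.
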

			    		  		
			    		  			\section{Examples}
			    		  				 	\begin{example}
			    		  				 		Consider the system \dref{an} with Dirichlet actuation, and
			    		  				 		the scalar $x(t)\in \mathbb{R}$ with
			    		  				 		$A=1$, $B=1$,  $A_1=0.4$, $a_2=0.1$, $a=0.2$, and $\bar u=20$. For the target system \dref{o}, we choose $K=-2$, $c=0.8$.
			    		  				 	    In order to enlarge the volume of the ellipse inside of the domain of attraction, we would like to minimize $\beta$.
			    		  				 		By Proposition \ref{ni}, with $\delta_0=\delta_1=0.3$, $c_1=0.91$, $c_2=2.93$, $r=1$, we obtain that $\min\beta=0.0739$, and the largest obtained ellipsoid inside of domain of attraction is given by
			    		  				 		$$ \mathcal{X}_\beta=\{(X_0,z_0)\in W:\max_{[-h,0]}|X_0|^2+\max_{[-h,0]}\|z_0\|^2\le 13.53\}.$$
			    		  				 		By Theorem \ref{3.2}, with $M_1=18.15$, $M_2=30.31$, we obtain
			    		  				 	$$
			    		  				 		\begin{array}{ll}
			    		  				 		\mathcal{X}_u=\{(X_0,u_0)\in W_1: 1.34\max\limits_{[-h,0]}|X_0|^2+2.24\max\limits_{[-h,0]}\|u_0\|^2
			    		  				 		\le 1\}.
			    		  				 		\end{array}
			    		  				 	$$
			    		  				 	
%			    		  				 		Note that by choosing larger $c$ and smaller $K$, we can increase the decay rate of the closed-loop system.  On the other hand, $M_1$ and $M_2$ also become larger, which means the domain of attraction becomes smaller. For the same $A=1$, $B=1$,   $a_1=0.1$, $a=0.1$, we choose $K=-1.5$, $c=0.4$ with $\bar u=3$. By Proposition \ref{ni}, with $\delta_0=\delta_1=0.25$, $c_1=1.21$, $c_2=1.40$, $\zeta=3.515$, $r=0.2$, we obtain that $\min\beta=0.5374$. Then 
%			    		  				 			$$
%			    		  				 			\begin{array}{ll}
%			    		  				 			\mathcal{X}_u=&\{(X_0,u_0)\in W_1: 7.56\max\limits_{[-h,0]}|X_0|^2\\&+16.31\max\limits_{[-h,0]}\|u_0\|^2
%			    		  				 			\le 1\}.
%			    		  				 			\end{array}
%			    		  				 			$$
			    		  				 	\end{example}
			    		  				 	Next, a finite difference method is applied to compute
			    		  				 	the displacement of coupled heat and ODE
			    		  				 	system to illustrate the effect of the proposed
			    		  				 	feedback control law \dref{s}.
			    		  				 	The steps of space and time are taken as 0.04 and 0.0002,
			    		  				 	respectively.
%			    		  				 	The scalar case is considered here, with the parameters of the system:
%			    		  				 	$A=1$, $B=1$, $K=-1.5$, $c=b=0.4$,  $P=0.2888$.
			    		  				 	In Figure 1 and Figure 2, we choose the delay $\tau(t)\equiv h=0.4$. \\
			    		  				 	Figure 1 demonstrates the state $(X(t), u(x,t))$ of the closed-loop original system of \dref{an} with saturated control \dref{s}. We choose the initial conditions:
			    		  				 	$X(\theta)\equiv0.82$, $u(x,\theta)\equiv 0.29\cos (\pi x)$, $\theta\in [-0.4,0]$. Hence,
			    		  				 	$$1.34\max_{[-h,0]}|X_0|^2+2.24\max_{[-h,0]}\|u_0\|^2=0.99<1.$$
			    		  				 	It is seen that the initial values are chosen inside the ellipsoid $\mathcal{X}_u$.
			    		  				 	The results show that the states of ODE and heat PDE converge in Figure 1. \\
			    		  				 	Figure 2 illustrates instability for initial values taken outside $X_u$: $X(\theta)\equiv 5 $, $u(x,\theta)\equiv 4\cos (\pi x)$, $\theta\in [-0.4,0]$. Here, $$1.34\max_{[-h,0]}|X_0|^2+2.24\max_{[-h,0]}\|u_0\|^2=7.17^2>1.$$

			    		  				 		\begin{example}
			    		  				 			Consider the system \dref{an} with Neumann actuation, and
			    		  				 			the scalar $x(t)\in \mathbb{R}$ with
			    		  				 			$A=1$, $B=1$, $A_1=0.4$,  $a_2=0.1$, $a=0.2$, and $\bar u=50$. For the target system \dref{o}, we choose $K=-4$, $c=1.8$.
			    		  				 			In order to enlarge the volume of the ellipse inside of the domain of attraction, we would like to minimize $\beta$.
			    		  				 			By Proposition \ref{nii}, with $\delta_0=\delta_1=0.5$, $c_1=6.98$, $c_2=9.9$, $c_3=1$, $r=r_1=1$, we obtain that $\min\beta=0.1176$, and the largest obtained ball inside of domain of attraction is given by
			    		  				 			$$
			    		  				 			\begin{array}{ll}
			    		  				 				\mathcal{X}_\beta=\{(X_0,z_0)\in W:\max\limits_{[-h,0]}|X_0|^2+\max\limits_{[-h,0]}\|z_0\|^2+\max\limits_{[-h,0]}\|z_0^\prime\|^2\le 8.50\}.
			    		  				 			\end{array}
			    		  				 			$$
			    		  				 			By Theorem \ref{3.4}, with $M_1=118.7$, $M_2=141.8$, we obtain
			    		  				 		$$
			    		  				 			\begin{array}{ll}
			    		  				 		\mathcal{X}_{u}=\left\{(X_0,u_0)\in W_1:13.96\max\limits_{[-h,0]}|X_0|^2
			    		  				 		+16.67\max\limits_{[-h,0]}\|u_0\|^2+0.47\max\limits_{[-h,0]}\|u_0^\prime\|^2\le 1\right\}.
			    		  				 		\end{array}
			    		  				 		$$
			    		  				 		\end{example}
			    		  				 		Also in this case, we obtain that the simulations of the solutions confirm
			    		  				 		the theoretical results. Thus, starting inside the ellipsoid with initial conditions: $X(\theta)\equiv0.26$, $u(x,\theta)\equiv 0.05\cos (\pi x)$, $\theta\in [-0.4,0]$, the system is stable. However, starting outside the ellipsoid with initial conditions: $X(\theta)\equiv 3$, $u(x,\theta)\equiv 0.05\cos (\pi x)$, $\theta\in [-0.4,0]$, the system is unstable and the solution of the system becomes unbounded.
%			    		  				 				Next, a finite difference method is applied to compute
%			    		  				 				the displacement of ODE-heat PDE cascade system to illustrate the effect of the proposed
%			    		  				 				feedback control law \dref{as}.
%			    		  				 				The steps of space and time are taken as 0.04 and 0.0002,
%			    		  				 				respectively.
%			    		  				 				In Figure 3 and Figure 4, we choose the delay $\tau(t)\equiv h=0.4$. \\
%			    		  				 				Figure 3 demonstrates the state $(X(t), u(x,t))$ of the closed-loop system of \dref{an} with saturated control \dref{as}. We choose the initial conditions:
%			    		  				 				$X(\theta)\equiv0.1$, $u(x,\theta)\equiv 0.1\cos (\pi x)$, $\theta\in [-0.4,0]$. Hence,
%			    		  				 				$$35.6\max_{[-h,0]}|X_0|^2+54.4\max_{[-h,0]}\|u_0\|^2+4\max\limits_{[-h,0]}\|u_0^\prime\|^2=0.825.$$
%			    		  				 				It is seen that the initial values are chosen inside the ellipsoid $\mathcal{X}_u$.
%			    		  				 				The results show that the states of ODE and heat PDE converge in Figure 3. \\
%			    		  				 				In Figure 4, we choose different initial values: $X(\theta)\equiv1.05$, $u(x,\theta)\equiv 0.1\cos (\pi x)$, $\theta\in [-0.4,0]$ such that $(X_0,u_0)$  are outside the ellipsoid $\mathcal{X}_u$. Actually, 	$$35.6\max_{[-h,0]}|X_0|^2+54.4\max_{[-h,0]}\|u_0\|^2+4\max\limits_{[-h,0]}\|u_0^\prime\|^2=6.32^2.$$	    		  				
			    		  				 	\begin{figure}
			    		  				 		\begin{center}
			    		  				 		\subfigure[State $X(t)$]
			    		  				 		{\includegraphics[width=6cm,height=4.5cm]{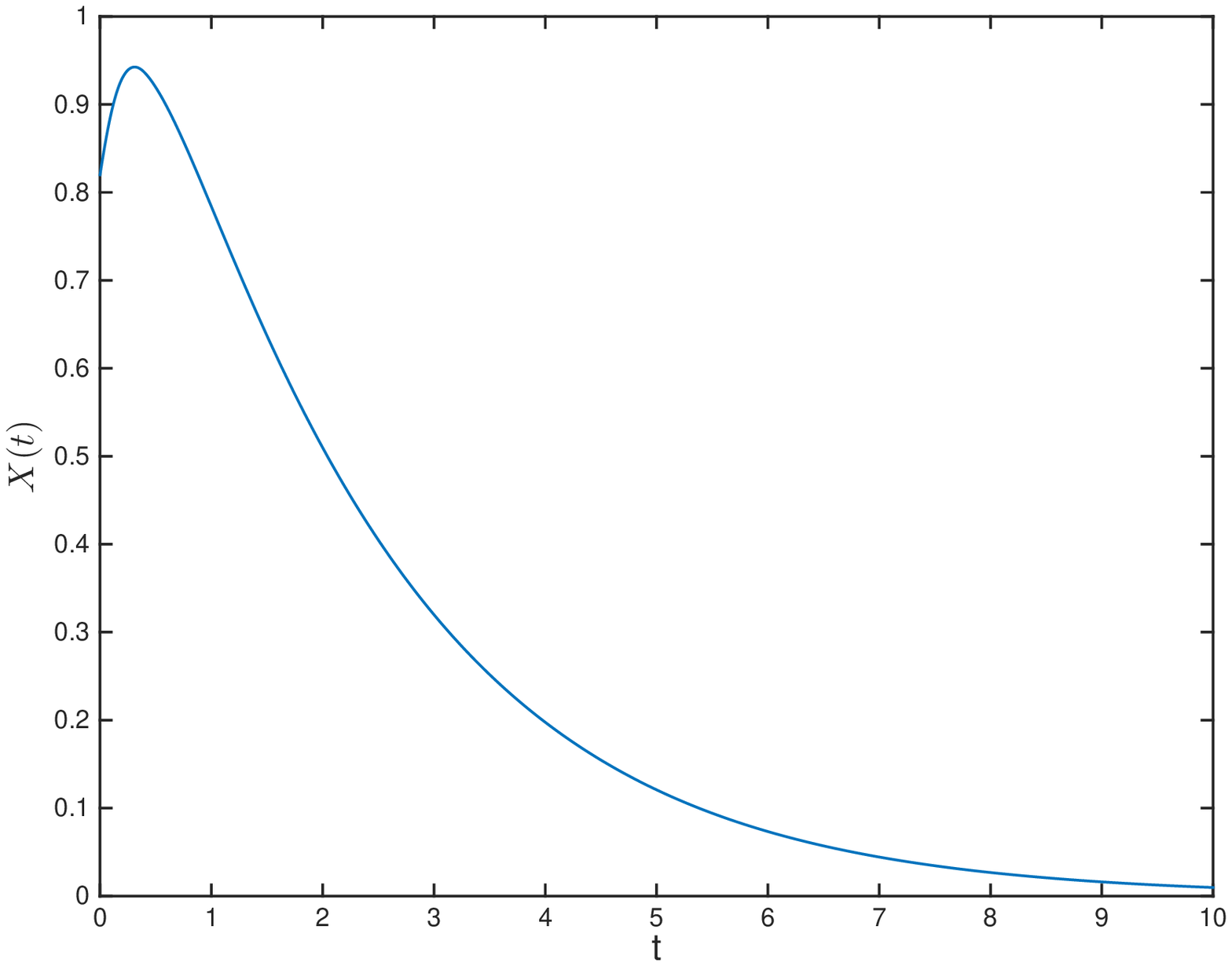}\label{Fig1.1}}
			    		  				 		\subfigure[State $u(x,t)$]
			    		  				 		{\includegraphics[width=6cm,height=4.5cm]{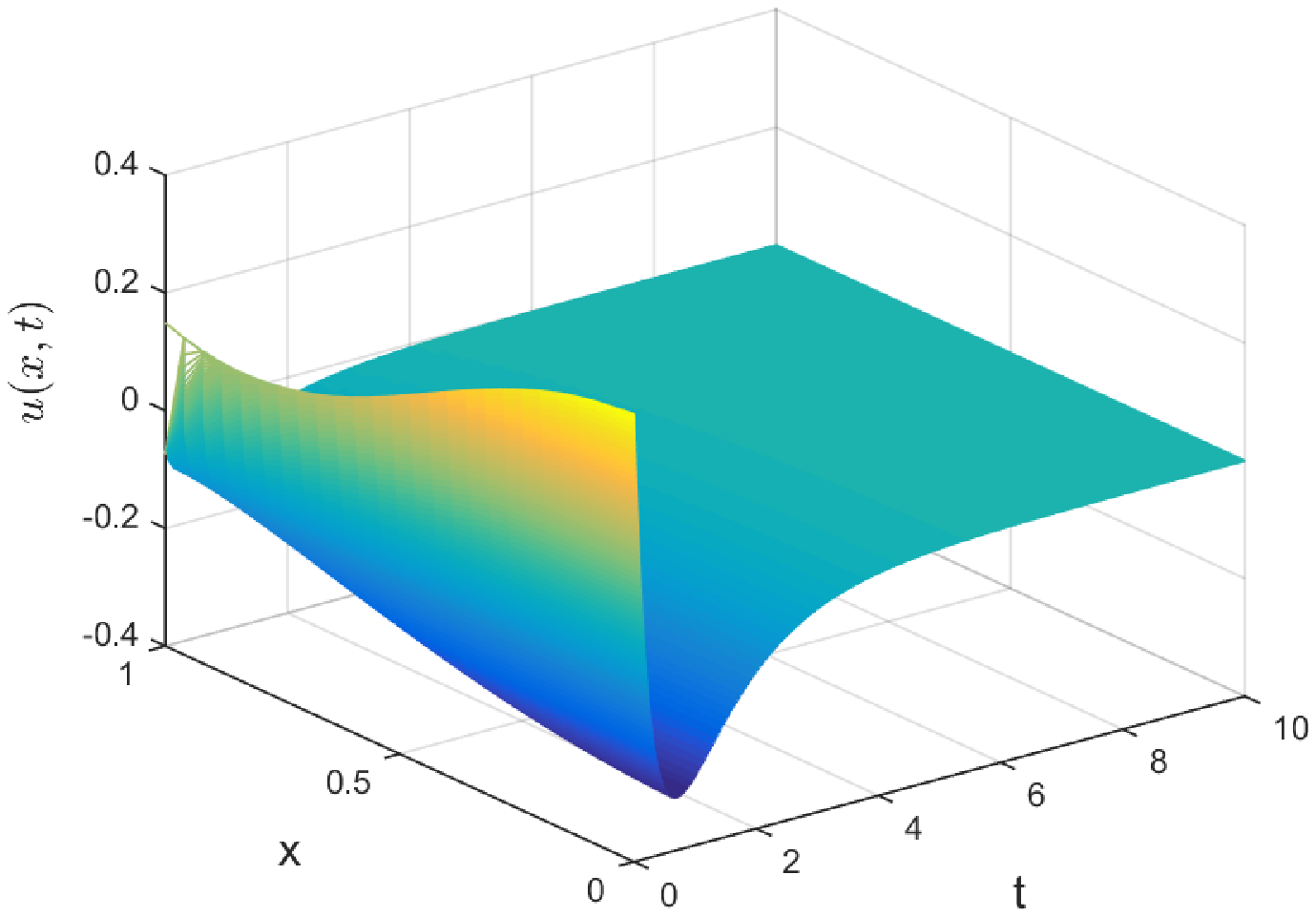}\label{Fig1.2}}
			    		  				 		\caption{State when the initial values are chosen inside the ellipsoid $\mathcal{X}_u$} \label{off}
			    		  				 		\end{center}
			    		  				 	\end{figure}

			    		  				 	\begin{figure}
			    		  				 		\begin{center}
			    		  				 		\subfigure[State $X(t)$]
			    		  				 		{\includegraphics[width=6cm,height=4.5cm]{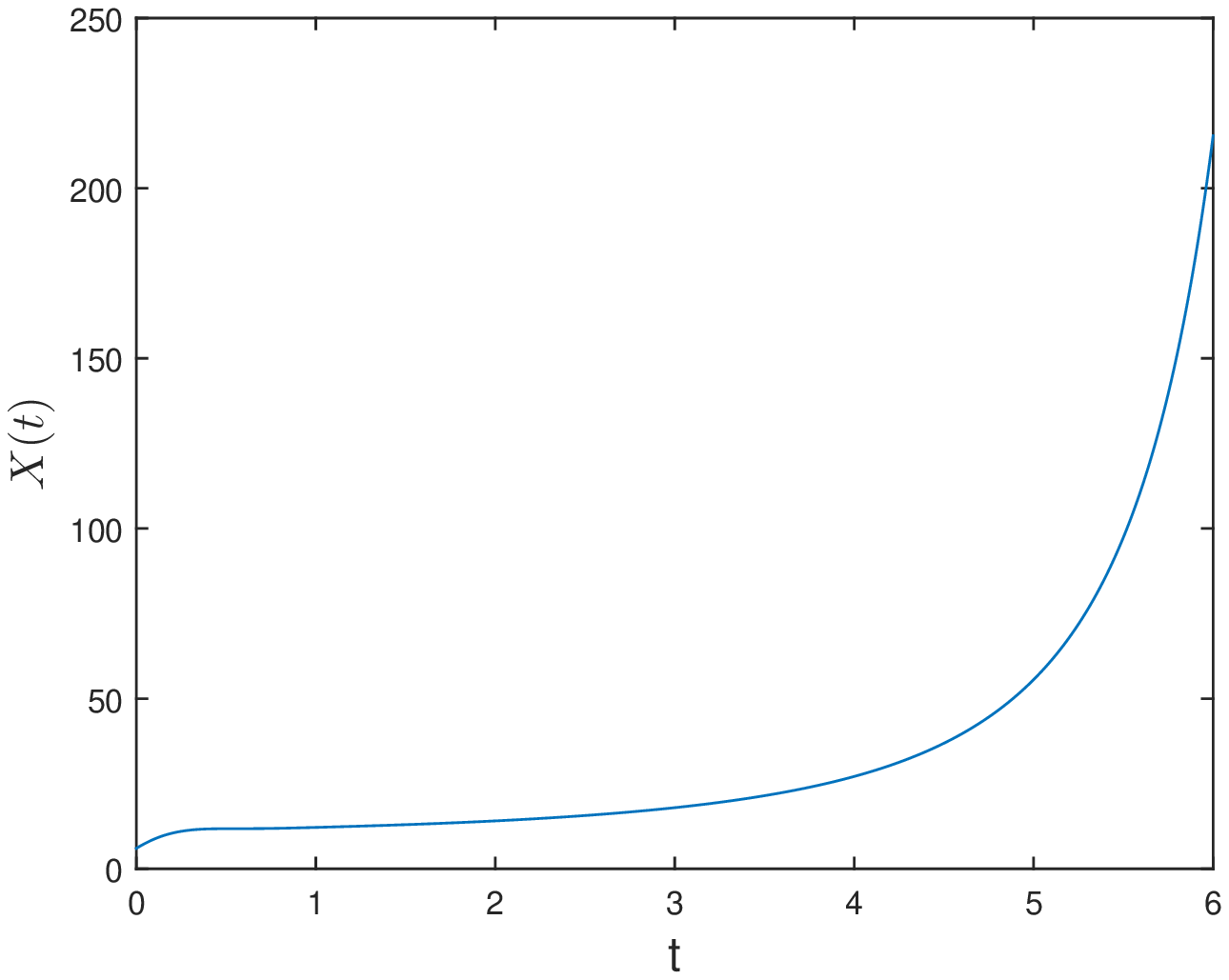}\label{Fig1.3}}
			    		  				 		\subfigure[State $u(x,t)$]
			    		  				 		{\includegraphics[width=6cm,height=4.5cm]{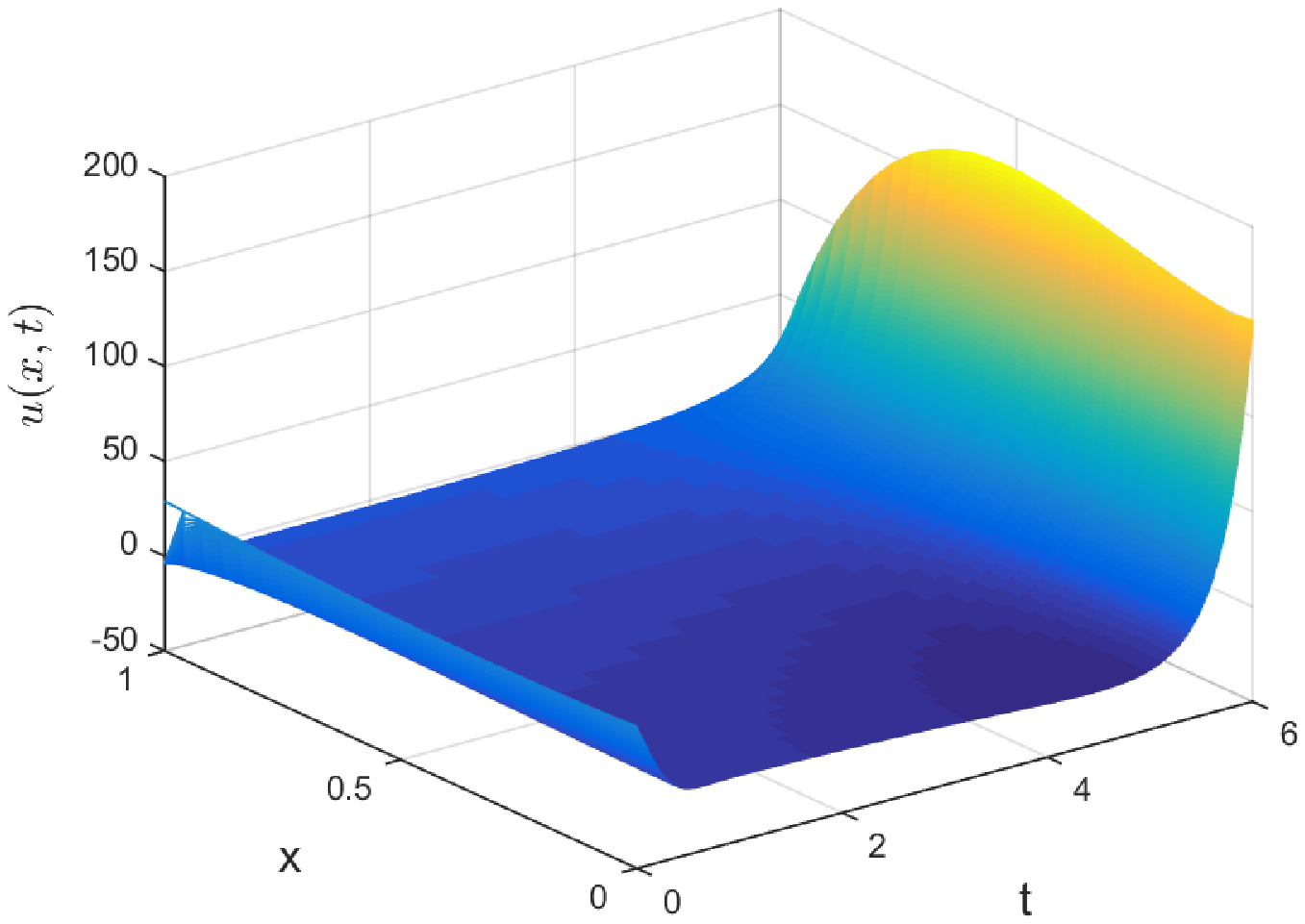}\label{Fig1.4}}
			    		  				 		\caption{State when the initial values are chosen outside the  ellipsoid $\mathcal{X}_u$} \label{on}
			    		  				 		\end{center}
			    		  				 	\end{figure}	

				\section{Conclusion}
				This paper for the first time studied boundary control of PDEs in the presence of saturation.
 Boundary stabilization of ODE-heat cascade with state time-varying delay was considered.
  The backstepping method was extended to  cascade of systems with state delays.
  %Based on the backstepping approach, we design a controller under saturation and we find
   An estimate on the domain of attraction in the presence of actuator saturation was found
   by using LMIs. Numerical examples illustrated the effectiveness of the proposed design method.

   The suggested approach may be extended to cascaded nonlinear ODE-Heat system, where the nonlinear term  satisfies the globally Lipschitz condition, and to observer-based control of such a system. The presented method gives efficient tools for various control problems for PDEs  with input constraints.  These may be the topics for future research.

			\end{document}